\theoremstyle{remark}
\newtheorem{exer}{Exercise}
\newcommand{\siderefboxold}[1]
{\mbox{}\marginpar{\hspace{0pt}\small{{   \href{\fullwebaddressold#1}{{#1}}}}}}
\newcommand{\newciteold}[2]{\cite{{#1}}{#2} \hspace{-1.1mm}\siderefboxold{{#1}}}
\newcommand{\siderefbox}[1]
{\mbox{}\marginpar{\hspace{0pt}\small{{   \href{\fullwebaddress#1}{{#1}}}}}}
\newcommand{\newcite}[2]{\cite{{#1}}{#2} \hspace{-1.1mm}\siderefbox{{#1}}}
\DeclareMathOperator{\im}{im}
\newcommand{\invlim}{\varprojlim}
\newcommand{\ra}{\rightarrow}
\newcommand{\leftsq}{[\![}
\newcommand{\rightsq}{]\!]}
\newcommand{\bbF}{\mathbb{F}}
\newcommand{\bbZ}{\mathbb{Z}}
\newcommand{\bbB}{\mathbb{B}}
\newcommand{\bbQ}{\mathbb{Q}}
\newcommand{\bbC}{\mathbb{C}}
\newcommand{\bbR}{\mathbb{R}}
\newcommand{\bbK}{\mathbb{K}}
\newcommand{\bbP}{\mathbb{P}}
\newcommand{\cC}{\mathcal{C}}
\newcommand{\links}{{\mathbf{Links}}}
\newcommand{\moy}{\mathbf{MOY}}
\newcommand{\modR}{\mathbf{Mod}_R}
\newcommand{\vectR}{\mathbf{Vect}_R}
\newcommand{\grmodR}{\mathbf{GrMod}_R}
\newcommand{\grvectQ}{\mathbf{GrVect_{\bbQ}}}
\newcommand{\prekh}{Kh}
\newcommand{\prerkh}{\widetilde{Kh}}
\newcommand{\prekhodd}{Kh_{\text{odd}}}
\newcommand{\prekhoddr}{\widetilde{Kh}_{\text{odd}}}
\newcommand{\kh}[3]{Kh^{{#1},{#2}}({#3})}
\newcommand{\rkh}[3]{\widetilde{Kh}^{{#1},{#2}}({#3})}
\newcommand{\khodd}[2]{Kh_{\text{odd}}^{{#1},{#2}}}
\newcommand{\khoddr}[2]{\widetilde{Kh}_{\text{odd}}^{{#1},{#2}}}
\newcommand{\preikh}{Kh_\bbZ}
\newcommand{\preirkh}{\widetilde{Kh}_\bbZ}
\newcommand{\ikh}[3]{Kh_\bbZ^{{#1},{#2}}({#3})}
\newcommand{\irkh}[3]{\widetilde{Kh}_\bbZ^{{#1},{#2}}({#3})}
\newcommand{\khkh}[4]{Kh_{#3}^{{#1},{#2}}({#4})}
\newcommand{\rkhkh}[4]{\widetilde{Kh}_{#3}^{{#1},{#2}}({#4})}
\newcommand{\lee}[1]{Lee^{#1}}
\newcommand{\kr}[2]{KR_{N}^{{#1},{#2}}}
\newcommand{\krr}[2]{\widetilde{KR}_{N}^{{#1},{#2}}}
\newcommand{\barh}{\widetilde{H}}
\newcommand{\tang}{{\mathbf{Tang}}}
\newcommand{\vectftwo}{{\mathbf{Vect}}_{\bbF_2}}
\newcommand{\vectq}{{\mathbf{Vect}}_{\bbQ}}
\newcommand{\posx}{\hspace{1mm}\raisebox{-1.3mm}{\includegraphics[scale=0.07]{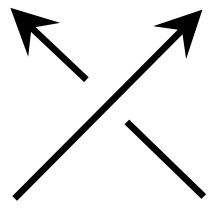}}\hspace{1mm}}
\newcommand{\negx}{\hspace{1mm}\raisebox{-1.3mm}{\includegraphics[scale=0.07]{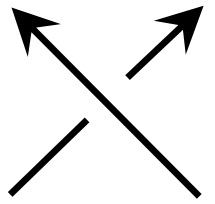}}\hspace{1mm}}
\newcommand{\ores}{\hspace{1mm}\raisebox{-1.3mm}{\includegraphics[scale=0.07]{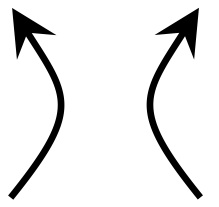}}\hspace{1mm}}
\newcommand{\sres}{\hspace{1mm}\raisebox{-1.3mm}{\includegraphics[scale=0.07]{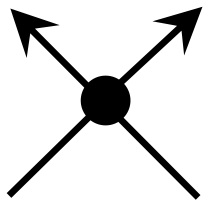}}\hspace{1mm}}
\newcommand{\uoreszero}{\hspace{1mm}\raisebox{-1.3mm}{\includegraphics[scale=0.07]{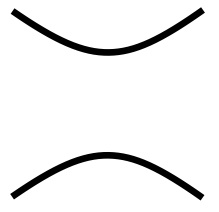}}\hspace{1mm}}
\newcommand{\uoresone}{\hspace{1mm}\raisebox{-1.3mm}{\includegraphics[scale=0.07]{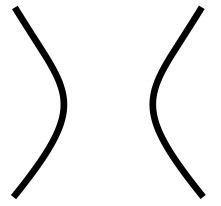}}\hspace{1mm}}
\newcommand{\uox}{\hspace{1mm}\raisebox{-1.3mm}{\includegraphics[scale=0.045]{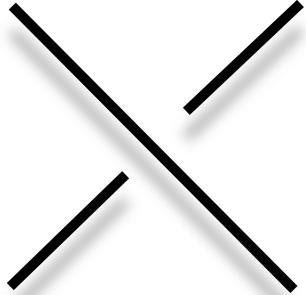}}\hspace{1mm}}
\title{A hitchhiker's guide to Khovanov homology}
\dedicatory{I dedicate these notes to the memory of Ruty Ben-Zion}
\author{Paul Turner}
\address{ Section de math\'ematiques,  
Universit\'e de Gen\`eve, 2-4 rue du Li\`evre, CH-1211, Geneva, Switzerland. }
\email{prt.maths@gmail.com}
\begin{document}
\cornersize{0.05}
\maketitle

\begin{abstract}
These notes from the 2014 summer school Quantum Topology at the CIRM in Luminy attempt to provide a rough guide to a selection of developments in Khovanov homology over the last fifteen years.  
\end{abstract}

\maketitle

\tableofcontents

\section*{Foreword}
There are already too many introductory articles on Khovanov homology
and another is not really needed. On the other hand by now -  15 years
after the invention of subject - it is quite easy to get lost after
having taken those first few steps. What could be useful is a rough
guide to some of the developments over that time and the summer school
{\em Quantum Topology} at the CIRM in Luminy has provided the ideal
opportunity for thinking about what such a guide should look like. It is quite a risky undertaking because it is all too easy to offend
 by omission, misrepresentation or other. I have not attempted a
 complete literature survey and inevitably these notes reflects my
 personal view, jaundiced as it may often be. My apologies in advance for any
 offense caused.

In this preprint version I have
added arXiv
reference numbers as marginal annotations. If you are reading this
electronically these are live links taking you directly to the
appropriate arXiv page of the article being referred to.

I would like to express my warm thanks to David Cimasoni, Lukas Lewark, Alex Shumakovitch, Liam Watson and Ben Webster.

\section{A beginning}
There are a number of introductions to Khovanov homology. A good place
to start is Dror Bar-Natan's exposition of Khovanov's work
\begin{itemize}
\item  {\em On Khovanov's categorification of the Jones polynomial}
  (Bar-Natan, \newciteold{0201043}) ,
\end{itemize}
followed by Alex Shumakovitch's introduction 
\begin{itemize}
\item {\em Khovanov homology theories and their applications} ( Shumakovitch, \newcite{1101.5614}),
\end{itemize}
not forgetting the original paper by Mikhail Khovanov
\begin{itemize}
\item  {\em A categorification of the Jones polynomial} (Khovanov, \newciteold{9908171}).
\end{itemize}
Another possible starting point is 
\begin{itemize}
\item {\em Five lectures on Khovanov homology} (Turner, \newciteold{0606464}).
\end{itemize}

\subsection{There is a link homology theory called Khovanov homology}

What are the minimal requirements of something deserving of the name {\em link homology theory}? We should expect a functor
$$
H\colon \links\ra \mathbf{A}
$$
where $\links$ is some category of links in which isotopies are morphisms and $\mathbf{A}$ another category, probably abelian, where we have in mind the category of finite dimensional vector spaces, $\vectR$, or of modules, $\modR$, over a fixed ring $R$. This functor should satisfy a number of properties.
\begin{itemize}
\item {\em Invariance.} If $L_1\ra L_2$ is an isotopy then the induced map $H(L_1) \ra H(L_2)$ should be an isomorphism.
\item {\em Disjoint unions.} Given two disjoint links $L_1$ and $L_2$ we want the union expressed in terms of the parts
$$H(L_1 \sqcup L_2) \cong H(L_1) \square H(L_2)$$
where $\square$ is some monoidal operation in $\mathbf{A}$ such as $\oplus$ or $\otimes$.
\item {\em Normalisation.} The value of $H(\text{unknot})$ should be specified. (Possibly also the value of the empty knot).
\item {\em Computational tool.} We want something like a long exact sequence which relates homology of a given link with associated ``simpler'' ones - something like the Meyer-Vietoris sequence in ordinary homology. 
\end{itemize}

If these are our expectations then Khovanov homology is bound to please. Let us take $\links$ to be the category whose objects are oriented links in $S^3$ and whose morphisms are  link cobordisms, that is to say compact oriented surfaces-with-boundary in $S^3\times I$ defined up to isotopy. All manifolds are assumed to be smooth.

\begin{theo}[Existence of Khovanov homology] \label{thm:existence}  There exists a (covariant) functor
$$
\prekh \colon \links \ra \vectftwo
$$
satisfying
\begin{enumerate}
\item If $\Sigma\colon L_1\ra L_2$ is an isotopy then $\prekh(\Sigma) \colon \prekh(L_1) \ra \prekh(L_2)$ is an isomorphism,
\item $\prekh(L_1\sqcup L_2) \cong \prekh(L_1) \otimes \prekh(L_2)$,
\item $\prekh(\text{unknot}) = \bbF_2 \oplus \bbF_2$ and $\prekh(\emptyset) = \bbF_2$,
\item If $L$ is presented by a link diagram a small piece of which is $ \uox$ then there is an exact triangle
$$
\xymatrix{
\prekh(\uoresone) \ar[rr] & & \prekh(\uox) \ar[dl]\\
& \prekh(\uoreszero) \ar[ul]
}
$$
\end{enumerate}
\end{theo}

In fact a little more is needed to guarantee something non-trivial and
in addition to the above we demand that $\prekh$ carries a bigrading
$$
\kh ** L = \bigoplus_{i,j\in \bbZ} \kh ijL
$$
and with respect to this
\begin{itemize}
\item  a link cobordism $\Sigma\colon L_1\ra L_2$ induces a map $\prekh(\Sigma)$ of bidegree $(0, \chi(\Sigma))$,
\item the generators of the unknot have bidegree $(0,1)$ and $(0,-1)$ (and for the empty knot bidegree $(0,0)$),
\item the exact triangle unravels as follows:\\
\noindent
{\em Case I: $\negx$} For each $j$ there is a long exact sequence
%{\small
$$
%\hspace{0.5cm}
\xymatrix@=16pt{
\ar[r]^(.2){\delta} & \kh i{j+1}  \ores \ar[r] & \kh i{j}  \negx \ar[r] & \kh {i-\omega}{j-1 -3\omega}  \uoreszero \ar[r]^(0.55)\delta & \kh {i+1}{j+1}  \ores \ar[r] & 
}
$$
%}
where $\omega$ is the number of negative crossings in the chosen orientation of $\uoreszero$  minus the number of negative crossings in $\negx$.\\
{\em Case II: $\posx$} For each $j$ there is a long exact sequence
$$
%\hspace{-0.5cm}
\xymatrix@=16pt{
\ar[r] & \kh {i-1}{j-1}  \ores \ar[r]^(.45){\delta} & \kh {i-1-c}{j-2-3c}  \uoreszero \ar[r] & \kh ij \posx \ar[r] & \kh i{j-1}  \ores \ar[r]^(0.8)\delta & 
}
$$
where $c$ is the number of negative crossings in the chosen orientation of $\uoreszero$ minus the number of negative crossings in $\posx$.\\
\end{itemize}

To prove the theorem one must construct such a functor, but first
let's see a few consequences relying only on existence and
standard results.

\begin{prop}
If a link $L$ has an odd number of components then $\kh *
{\text even}{L}$ is trivial. If it has an even number of components then  $\kh * {\text odd}{L}$ is trivial.
\end{prop}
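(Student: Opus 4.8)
Here is a plan for proving the Proposition.

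First I would reformulate: the two assertions together are equivalent to the single statement that, writing $n(L)$ for the number of components of $L$, one has $\kh ijL=0$ whenever $j\not\equiv n(L)\pmod 2$. The plan is to prove this by induction on the number of crossings in a diagram $D$ of $L$, using only properties (2), (3) and the bigraded refinement of property (4). When $D$ has no crossings, $L$ is the unlink on $n=n(L)$ components, so by (2) and (3) $\prekh(L)\cong\prekh(\text{unknot})^{\otimes n}$ (with $\prekh(\emptyset)=\bbF_2$ in bidegree $(0,0)$ if $n=0$); since the two generators of the unknot have bidegrees $(0,1)$ and $(0,-1)$, every generator of this tensor product has bidegree $(0,j)$ with $j\equiv n\pmod 2$, which settles the base case.

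For the inductive step I would pick a crossing of $D$. According to its sign we are in Case I or Case II, and in either case $\prekh(L)$ sits in a long exact sequence together with $\prekh(L')$ and $\prekh(L'')$, where $L'$ is the link obtained by taking the \emph{oriented} resolution at that crossing (inheriting the orientation of $L$) and $L''$ the link obtained from the other resolution (equipped with a chosen orientation). Both $L'$ and $L''$ are presented by diagrams with one crossing fewer, so the inductive hypothesis applies to them. Reading off the sequence, $\kh ijL\ne 0$ forces either $\kh i{j+1}{L'}\ne 0$ (Case I) or $\kh i{j-1}{L'}\ne 0$ (Case II), or else $\prekh(L'')$ is nonzero in the shifted bidegree prescribed in the statement, namely $(i-\omega,\,j-1-3\omega)$ in Case I and $(i-1-c,\,j-2-3c)$ in Case II. It remains to see that each possibility forces $j\equiv n(L)\pmod 2$.

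This comes down to two elementary facts. First, the oriented resolution of a crossing changes the number of components by exactly $\pm 1$: the two local strands either lie on one component, which then splits into two, or on two distinct components, which then merge. Hence $n(L')\equiv n(L)+1\pmod 2$, and because the $L'$-term appears with $j$ shifted by $\pm 1$ it contributes only classes with $j\equiv n(L)$. Second, for the $L''$-term one needs the parity of $\omega$ (resp.\ $c$), the change in the number of negative crossings caused by re-orienting the disoriented resolution. Reversing the orientation along an arc of a diagram flips the sign of exactly those crossings having precisely one strand on that arc; completing this arc and its complement to closed curves in $\bbR^2$ and invoking the standard fact that two distinct closed plane curves meet transversally in an even number of points, one checks that this count, and hence $\omega$ (resp.\ $c$), is congruent mod $2$ to $n(L)-n(L'')$, plus $1$ in Case I (the correction coming from the resolved crossing itself being negative there). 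Substituting into the prescribed shifts — where the precise values $-1-3\omega$ and $-2-3c$, together with $3\omega\equiv\omega$ and $3c\equiv c$, are exactly what is needed — the $L''$-term likewise contributes only classes with $j\equiv n(L)\pmod 2$, which closes the induction.

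The main obstacle is this last computation: one must organise carefully the bookkeeping of how crossing signs and the number of components behave under the disoriented resolution and its re-orientation, in order to confirm that the degree shifts appearing in the two long exact sequences are precisely those required. Everything else is routine.
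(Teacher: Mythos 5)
Your argument is correct and is essentially the paper's own proof: induction on the number of crossings, with the base case the unlink and the inductive step running through the two long exact sequences, the key input being the parity of $\omega$ and $c$ according to whether the two strands at the chosen crossing lie on the same component of $L$ — which is precisely the elementary lemma the paper invokes. The only difference is that you additionally sketch a (correct) justification of that parity lemma via the even-intersection property of closed plane curves, whereas the paper states it without proof.
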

\begin{proof}
  The proof is by induction on the number of crossing and uses the following elementary result.
\begin{center}
\begin{minipage}{0.9\linewidth}
  \begin{lemm}
    In the discussion of the long exact sequences above (i) if the strands featured at the crossing are from the same component then $\omega$ is odd and $c$ is even, and (ii) if they are from different components then $\omega$ is even and $c$ odd.
  \end{lemm}
 \end{minipage}
\end{center}
For the inductive step we use this and, depending on the case, one of the long exact sequence shown above, observing that in each case two of the three groups shown are trivial.
\end{proof}

\begin{prop}
 If $L^!$ denotes the mirror image of the link $L$ then $\kh i j {L^!} \cong \kh {-i}{-j} L$.
\end{prop}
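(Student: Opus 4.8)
\emph{Proof proposal.}
The plan is to prove the statement at the chain level, using the standard description of Khovanov homology as the cohomology of a complex $CKh(D)$ built from a diagram $D$, together with the standard fact (already in Khovanov's original paper) that reflecting a diagram dualises this complex.

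First I would fix a diagram $D$ for $L$, so that its reflection $D^{!}$ is a diagram for $L^{!}$. Recall that $CKh(D)$ is assembled from the cube of resolutions of $D$: each vertex is one of the $2^{n}$ complete resolutions of the $n$ crossings and contributes $A^{\otimes(\#\text{circles})}$, where $A=\bbF_{2}[X]/(X^{2})$, and the differential is built from the multiplication $m\colon A\otimes A\to A$ and comultiplication $\Delta\colon A\to A\otimes A$. The point is that reflection interchanges the $0$- and $1$-smoothing at every crossing, so it carries the resolution of $D$ indexed by a word $s$ to the resolution of $D^{!}$ indexed by the complementary word $\bar s$; thus $D$ and $D^{!}$ have the same resolution diagrams but with the direction of the cube reversed, and a merge in one direction becomes a split in the other. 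Since $A$ is a (graded) self-dual Frobenius algebra — the Frobenius pairing identifies $A\cong A^{*}$, interchanges $m$ with $\Delta$, and reverses the internal grading — this upgrades to an isomorphism of complexes
$$
CKh^{i,j}(D^{!})\;\cong\;\bigl(CKh^{-i,-j}(D)\bigr)^{*},
$$
once one checks that the homological normalisation (the shift by $n_{-}$, using $n_{-}(D^{!})=n_{+}(D)$ and $n_{+}+n_{-}=n$) and the quantum normalisation combine to send $(i,j)$ to precisely $(-i,-j)$.

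Then I would pass to homology. Since we work over the field $\bbF_{2}$ and each $CKh^{i,j}(D)$ is finite dimensional, dualising a complex commutes with cohomology up to reversal of degree: $H^{i}(\mathrm{Hom}(C,\bbF_{2}))\cong\bigl(H^{-i}(C)\bigr)^{*}$. Applied to the displayed isomorphism this gives $Kh^{i,j}(L^{!})\cong\bigl(Kh^{-i,-j}(L)\bigr)^{*}$; and since a finite-dimensional $\bbF_{2}$-vector space is isomorphic to its dual, we obtain $Kh^{i,j}(L^{!})\cong Kh^{-i,-j}(L)$, as claimed.

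The only place genuine care is needed — the main obstacle — is the bigrading bookkeeping in the displayed isomorphism: one must follow both gradings through the reversal of the cube and the self-duality of $A$ and confirm that the two normalisation shifts conspire to produce the reflection $(i,j)\mapsto(-i,-j)$ exactly, with no residual shift. Everything else is formal. One could instead try a purely axiomatic induction on the number of crossings, noting that reflection takes a Case I long exact sequence to a Case II one with shift parameters related by $c=-1-\omega$, so that after applying the inductive hypothesis to the two resolutions (which have fewer crossings) the outer terms of the two sequences agree after negating gradings; but converting this into an isomorphism of the middle terms would require the exact triangle of Theorem~\ref{thm:existence} to be natural with respect to reflection, which is not part of the stated axioms — so the chain-level argument is the honest route.
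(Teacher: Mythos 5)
Your proof is correct, but it takes a genuinely different route from the one the paper actually follows, and it is worth seeing why. You prove the duality at the chain level: reflection reverses the cube of resolutions, the Frobenius algebra $A=\bbF_2[X]/(X^2)$ is self-dual (swapping $m$ and $\Delta$ and reversing internal degree), and dualising commutes with taking cohomology over a field; the grading bookkeeping indeed closes up, since $n_-(D^!)=n_+(D)$ and the complementary monomial to $x_{\gamma_1}\cdots x_{\gamma_m}\in F_D(A)$ lands in bidegree exactly $(-(|A|-n_-),-(|A|+\|A\|-2m-2n_-+n_+))$. The paper, by contrast, deliberately avoids the chain complex: it takes the bent-cylinder cobordism $\Sigma\colon L^!\sqcup L\ra\emptyset$ with $\chi(\Sigma)=0$, uses functoriality to get a bidegree-$(0,0)$ pairing $\Sigma_*\colon\kh**{L^!}\otimes\kh**L\ra\bbF_2$, and invokes the standard cylinder-straightening isotopy to see the pairing is nondegenerate, so that the duality falls out of matching graded pieces. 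That argument is purely axiomatic, in keeping with the section's stated aim of deriving consequences from Theorem~\ref{thm:existence} alone, and it makes the role of functoriality transparent; your chain-level argument is closer to Khovanov's original paper, is self-contained given the construction, and is the one to reach for if you need an explicit chain map rather than a mere isomorphism of groups. You rightly observe that a naive induction on crossings via the long exact sequences does not go through, but you overlooked that there is a second axiomatic route — the cobordism/pairing argument — which sidesteps the chain complex entirely and is the one the paper uses.
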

\begin{proof}
  There is a link cobordism $\Sigma\colon L^! \sqcup L \ra \emptyset$
  with $\chi(\Sigma) = 0$ obtained by bending the identity cobordism  (a cylinder) $L\ra L$.
Since $\prekh$ is a functor there is an induced map of bidegree $(0,\chi(\Sigma)) = (0,0)$
$$
\Sigma_*\colon \kh ** {L^! } \otimes \kh ** {L}\ra \kh ** \emptyset = \bbF_2.
$$
By a standard ``cylinder straightening isotopy'' argument
\begin{center}
%\begin{wrapfigure}{l}{4cm}
%\vspace{-5mm}
\includegraphics[scale=0.2]{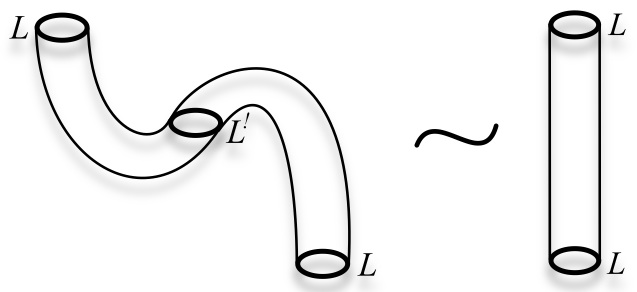}
%\vspace{-4mm}
%\end{wrapfigure}
\end{center}
 the bilinear form is non-degenerate, 
and the result follows recalling that we are in a bigraded setting so

$$
(\kh ** {L^! } \otimes \kh ** {L} )^{0,0} = \bigoplus_{i,j} \kh i j  {L^! } \otimes \kh {-i}{-j} {L}.
$$ 
\end{proof}

\begin{exer}
  Theorem \ref{thm:existence} includes the statement that
  $\prekh(\text{unknot}) = \bbF_2 \oplus \bbF_2$. In fact we could assume
  the weaker statement: the homology of the unknot is concentrated in
  degree zero. Use this along with the diagram
  \raisebox{-0.5mm}{\includegraphics[scale=0.07]{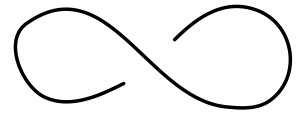}}, the long
  exact sequence, the property on disjoint unions and the invariance of Khovanov homology to show that
  dim$(\prekh(\text{unknot}))=2$.
\end{exer}

\begin{prop} For any oriented link $L$, 
 $$
\frac{1}{t^\frac12 + t^{-\frac12}}\sum_{i,j} (-1)^{i+j+1}t^{\frac{j}2}
\text{\em \small dim}(\kh i j L)
$$
is the Jones polynomial of $L$.
\end{prop}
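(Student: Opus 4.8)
The plan is to recognise the stated expression --- call it $P(L)$ --- as the Jones polynomial by verifying the two properties that characterise the latter among invariants of oriented links: $P(\text{unknot})=1$ and the oriented skein relation $t^{-1}P(L_+)-tP(L_-)=(t^{1/2}-t^{-1/2})P(L_0)$. Before anything else I would note that $P$ really is a link invariant: by property~(1) of Theorem~\ref{thm:existence} an isotopy induces an isomorphism between the Khovanov homologies of the two presentations, and it has bidegree $(0,\chi)$ with $\chi=0$ for a cylinder, so it preserves the bigrading; hence each $\dim\kh ijL$, and therefore $P(L)$, depends only on the link type.

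The normalisation is immediate from property~(3): the two generators of the unknot have bidegrees $(0,1)$ and $(0,-1)$, so the numerator of $P(\text{unknot})$ is $(-1)^{0+1+1}t^{1/2}+(-1)^{0+(-1)+1}t^{-1/2}=t^{1/2}+t^{-1/2}$ and $P(\text{unknot})=1$. Combining properties~(2) and~(3) one also gets $P$ of the $k$-component unlink equal to $(-t^{1/2}-t^{-1/2})^{k-1}$, a useful consistency check.

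The real content is the skein relation, for which I would feed the two long exact sequences Case~I~($\negx$) and Case~II~($\posx$) --- the unravelled form of property~(4) --- into the elementary fact that the alternating sum of dimensions around a three-periodic exact sequence of finite-dimensional spaces vanishes. Present $L_+$ by a diagram with a distinguished positive crossing, let $L_-$ be the same diagram with that crossing switched, $L_0=\ores$ its oriented resolution, and $L_\infty$ its other resolution. The observation that glues the two sequences together is that $L_0$ and $L_\infty$ are simultaneously the two resolutions of $L_+$ and of $L_-$: the oriented resolution is dictated by the orientations of the four local strand-ends, not by the over/under information, so it is the same $L_0$ in both diagrams, and hence so is the remaining resolution $L_\infty$. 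Summing the fixed-$j$ Euler characteristics against the weight $(-1)^{j+1}t^{j/2}$, Case~II becomes a relation $\langle L_+\rangle=(\text{monomial})\langle L_0\rangle+(\text{monomial})\langle L_\infty\rangle$ and Case~I becomes $\langle L_-\rangle=(\text{monomial})\langle L_0\rangle+(\text{monomial})\langle L_\infty\rangle$, where $\langle L\rangle:=\sum_{i,j}(-1)^{i+j+1}t^{j/2}\dim\kh ijL$ is the numerator of $P(L)$ and the monomials (powers of $t^{\pm1/2}$, with signs) are read off from the shifts $(-1-c,-2-3c)$ and $(-\omega,-1-3\omega)$ in the two sequences. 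Eliminating $\langle L_\infty\rangle$ and using the identity $\omega=c-1$ --- forced because $L_+$ and $L_-$ differ in exactly one crossing, of opposite signs there --- collapses everything to a relation among $\langle L_+\rangle$, $\langle L_-\rangle$ and $\langle L_0\rangle$ which, divided by $t^{1/2}+t^{-1/2}$, is the skein relation above. Since any oriented diagram can be unknotted by crossing changes and the skein relation expresses the invariant of a diagram through those of diagrams with a switched or smoothed crossing, these two properties pin $P$ down, so $P(L)$ is the Jones polynomial.

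I expect the main obstacle to be the bookkeeping in that last paragraph: determining the exact monomials, signs included, produced by the shifts $(-1-c,-2-3c)$ and $(-\omega,-1-3\omega)$; checking that the choice of an orientation on $L_\infty$ --- needed even to speak of $\kh **{L_\infty}$ --- washes out, since changing it alters $\omega$ (resp.\ $c$) and the overall bidegree of $\kh **{L_\infty}$ in compensating ways; and confirming that the surviving three-term identity really is $t^{-1}P(L_+)-tP(L_-)=(t^{1/2}-t^{-1/2})P(L_0)$ and not its mirror or a monomial rescaling. Everything else is an axiom from Theorem~\ref{thm:existence} or the standard uniqueness of the Jones polynomial.
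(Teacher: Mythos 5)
Your proposal is correct and follows essentially the same route as the paper: set up a polynomial from the graded dimensions, exploit the vanishing of alternating sums of dimensions in the two long exact sequences, eliminate the term coming from the unoriented resolution using $c=\omega+1$ (your $\omega=c-1$), arrive at the oriented skein relation, and invoke uniqueness of the Jones polynomial. The only difference is cosmetic --- the paper works in a variable $q$ with $P(L)=\sum(-1)^iq^j\dim\kh ijL$ and substitutes $q=-t^{1/2}$ at the end, while you build the sign $(-1)^{i+j+1}$ and the $t$-powers in from the start; and you make explicit a couple of points the paper leaves implicit (that $\ores$ and $\uoreszero$ are simultaneously the two resolutions of both $\posx$ and $\negx$, and that the grading-preserving isomorphisms from isotopies make each $\dim\kh ijL$ an invariant).
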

\begin{proof}
 Let $P(L) = \Sigma_{i,j} (-1)^{i}q^j \text{\small dim}(\kh i j L)$ and suppose  $L$ is represented by a diagram $D$. The alternating sum of dimensions in a long exact sequence of vector spaces is always zero, so from the long exact sequence for a negative crossing we have that for each $j\in \bbZ$ the sum
%{\footnotesize
$$
 \sum_{i} (-1)^{i}\text{\small dim}(  \kh i {j+1} {\! \ores \!}) 
 - \sum_{i} (-1)^{i}\text{\small dim}(\kh i {j} {\! \negx \!}) + \sum_{i}
 (-1)^{i}\text{\small dim}(\kh {i-\omega} {j-1-3\omega} {\! \uoreszero
   \!}) 
$$
%}
is zero. Written in terms of the polynomial $P$ this becomes
$$
q^{-1}P(\ores)- P(\negx) + (-1)^\omega q^{1+3\omega} P(\uoreszero) = 0.
$$
Similarly, using the long exact sequence for a positive crossing (noting that $c=\omega +1$) we get 
$$
(-1)^\omega q^{5+3\omega}P(\uoreszero) - P(\posx) +  q P(\ores) = 0.
$$
Combining these gives
$$
q^{-2}P(\posx) - q^2P(\negx) + (q-q^{-1})P(\ores)=0
$$
which becomes the skein relation of the Jones polynomial when $q=-t^{\frac12}$. Since $P(\text{unknot}) = q + q^{-1} =  -(t^\frac12 + t^{-\frac12})$, the uniqueness of the Jones polynomial gives
$$
P(D)\mid_{q=-t^{\frac12} } = -(t^\frac12 + t^{-\frac12}) J(D)
$$
whence the result.
\end{proof}

\begin{rema}
Rasmussen has given tentative definition of what a knot homology theory should encompass
(somewhat different from the expectations given above) discussing both
Khovanov homology and Heegaard-Floer knot homology (Rasmussen, \newciteold
{0504045}). \end{rema}

\subsection{Reduced Khovanov homology}
There is a further piece of structure induced on Khovanov homology defined in the following way. The Khovanov homology of the unknot is a ring with unit courtesy  of the cobordisms
 \raisebox{-1.3mm}{\includegraphics[scale=0.03]{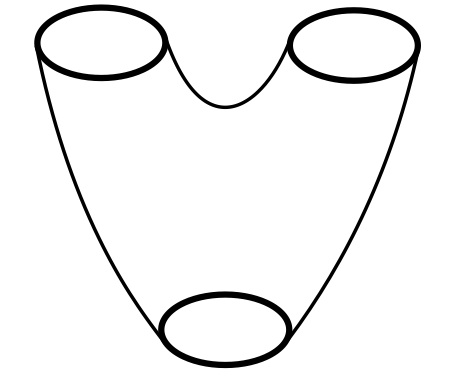}}  and
 \raisebox{3mm}{\includegraphics[scale=0.1, angle=180]{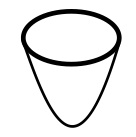}}  which
 induce multiplication
\begin{wrapfigure}{l}{3.5cm}
%\vspace{-5mm}
\includegraphics[scale=0.22]{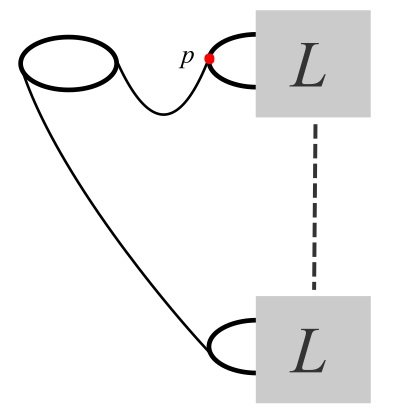}
%\vspace{-7mm}
\end{wrapfigure} 
 and unit  respectively.  The Khovanov homology of a link $L$ together with a chosen point $p$ is a module over this ring, using the link cobordism indicated.
A priori this module structure depends on the point $p$ and in particular on the component of $L$ to which $p$ belongs. Although it does not follow  from the existence theorem directly, for the version of Khovanov homology presented above (namely over $\bbF_2$), this structure does not depend on these choices. In fact more is true (again not immediate from the existence theorem) and the structure of $\kh ** L $ over $U^{**} = \kh**{\text{unknot}}$ can be described as follows: there exists a bigraded vector space $\rkh ** L$ with the property that
$$
\kh **L \cong \rkh **L \otimes U^{*,*}.
$$

\begin{theo} [Existence of reduced Khovanov homology] There exists a (covariant) functor
$$
\prerkh^{*,*} \colon \links \ra \vectftwo
$$
satisfying
\begin{enumerate}
\item If $\Sigma\colon L_1\ra L_2$ is an isotopy then $\prerkh (\Sigma)$ is an isomorphism,
\item $\rkh**{L_1\sqcup L_2} \cong \rkh ** {L_1} \otimes \rkh **
  {L_2}\otimes U^{*,*}$,
\item $\rkh**{\text{unknot}} = \bbF_2$ in bidegree $(0,0)$,
\item $\prerkh^{*,*}$ satisfies the same long exact sequence (with the
  same bigradings) written down previously for the unreduced case.
\end{enumerate}
\end{theo}

There is a question about which category of links we should
    be using here. A natural one would be links with a marked point
    and cobordisms with a marked line. If $L$ has an odd number of components then
  $\prerkh^{*,\text{odd}}$ is
  trivial and if it has an even number of components then
  $\prerkh^{*,\text{even}}$ is trivial. By a similar argument to what
  we saw previously we have
 $$
\sum_{i,j} (-1)^{i+j}t^{\frac{j}2} \text{dim}(\rkh i j L) = J(L).
$$

In order to compute Khovanov homology we should use our tools for that
purpose which are the two long exact sequences. 

\begin{exer}[beginners]
  Using the long exact sequences calculate the reduced Khovanov
  homology of the Hopf link, left and right trefoils, and the figure
  eight knot.
\end{exer}
\begin{exer}[experts]
  Find the first knot in the tables for which the reduced Khovanov
  homology can not be calculated using only the long exact sequences
  and calculations of the reduced Khovanov homology of knots and links
  occurring previously in the tables.
\end{exer}

Alternating links have particularly simple Khovanov homology (Lee, \newciteold{0201105}).

\begin{prop}\label{prop:alternating}
  For a non-split alternating link  $L$ the vector space
  $\rkh ij {L}$ is trivial unless$ j-2i$ is the signature of $L$. 
\end{prop}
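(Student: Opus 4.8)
The plan is to induct on the number of crossings of a connected reduced alternating diagram. First I would reduce to that setting: a non-split link has a connected diagram, an alternating link has an alternating one (a disconnected diagram would force the link to be split), and deleting nugatory crossings by Reidemeister~I moves makes such a diagram reduced without changing the link. So it suffices to prove that every link $L$ carrying a connected reduced alternating diagram $D$ is thin, supported on the diagonal $j-2i=\sigma(L)$. The base case is the round circle ($D$ with no crossings), where $\widetilde{Kh}$ is $\mathbb{F}_2$ in bidegree $(0,0)$ and $j-2i=0=\sigma(L)$. Using the mirror symmetry $\rkh{i}{j}{L^{!}}\cong\rkh{-i}{-j}{L}$ (the reduced analogue of the mirror proposition above) together with $\sigma(L^{!})=-\sigma(L)$, the assertion is invariant under $L\mapsto L^{!}$, so in the inductive step I may assume a chosen crossing $c$ of $D$ is negative and work with the Case~I long exact sequence.

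The structural input is: for any crossing $c$ of a connected reduced alternating diagram, each of the two resolutions at $c$, after removing by Reidemeister~I whatever nugatory crossings it creates, is again a connected reduced alternating diagram, with strictly fewer crossings. I would read this off the checkerboard (Tait) graph $G$ of $D$ together with its planar dual: a crossing is nugatory precisely when it is a loop of $G$ or of its dual, so $D$ being reduced says exactly that $G$ is loopless and bridgeless; one resolution at $c$ is ``delete the edge $c$ from $G$'', the other is ``contract it''; deleting an edge of a bridgeless graph, or contracting a non-loop edge, leaves the graph connected; and the nugatory crossings that subsequently appear are bridges of $G$, whose removal amounts to contracting those bridges, which preserves connectedness and creates no new loops. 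The point to underline is that \emph{no resolution of a connected diagram is split}, so the induction never leaves the class.

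Write $L_{0}$ for the oriented resolution and $L_{1}$ for the other one. By the inductive hypothesis $\widetilde{Kh}(L_{0})$ and $\widetilde{Kh}(L_{1})$ are each supported on a single diagonal, $j-2i=\sigma(L_{0})$ respectively $j-2i=\sigma(L_{1})$. Plugging this into the Case~I sequence
$$\cdots\to\rkh{i}{j+1}{L_{0}}\to\rkh{i}{j}{L}\to\rkh{i-\omega}{j-1-3\omega}{L_{1}}\to\rkh{i+1}{j+1}{L_{0}}\to\cdots$$
and using that a group flanked in an exact sequence by two zero groups vanishes, one gets that $\rkh{i}{j}{L}\neq 0$ forces
$$j-2i=\sigma(L_{0})-1\qquad\text{or}\qquad j-2i=\sigma(L_{1})+\omega+1,$$
with $\omega$ the crossing-count correction appearing in the long exact sequence. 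Hence $L$ is thin, with diagonal $\sigma(L)$, as soon as $\sigma(L_{0})-1=\sigma(L_{1})+\omega+1=\sigma(L)$.

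I expect that last equality to be the real obstacle: it is the signature bookkeeping, and it is not a Khovanov-homology statement at all but a classical fact about how the ordinary link signature changes under the oriented and the unoriented smoothing of a crossing of a reduced alternating diagram — one must relate $\sigma(L)$, $\sigma(L_{0})$, $\sigma(L_{1})$, $\omega$, and whether the two strands at $c$ lie on the same or on different components (this last being controlled by the parity of $\omega$ from the elementary lemma above). I would import these identities from the work of Murasugi, Traczyk and Gordon--Litherland, conveniently packaged via the definiteness of the Goeritz form of a reduced alternating diagram, and I would organize their verification as a second induction running alongside the homological one. Checking the two Hopf links by hand is the natural sanity test that the grading and signature conventions have been lined up correctly.
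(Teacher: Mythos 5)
The paper does not prove this proposition at all: it is stated as a quotation of Lee's theorem (arXiv:0201105), with a pointer to Wehrli's spanning-tree proof as an alternative. So the only meaningful comparison is with Lee's original argument, and your outline is essentially a faithful reconstruction of it: reduce to connected reduced alternating diagrams, induct on crossings using the skein long exact sequence, control connectedness and reducedness of the two resolutions via deletion/contraction in the Tait graph, and close the induction with a signature recursion. Your grading computation from the Case~I sequence is correct: nonvanishing of $\rkh ij L$ forces $j-2i=\sigma(L_0)-1$ or $j-2i=\sigma(L_1)+\omega+1$, and the mirror reduction to a negative crossing is legitimate since both the diagonal condition and the class of connected reduced alternating diagrams are preserved under $L\mapsto L^{!}$.

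The one genuine gap is exactly the one you flag: the identities $\sigma(L)=\sigma(L_0)-1$ and $\sigma(L)=\sigma(L_1)+\omega+1$ are the entire content of the theorem, and you do not prove them. They are true and citable --- the first is Traczyk's theorem on signatures of alternating skein triples, the second follows from a Gordon--Litherland/Goeritz computation as in Lee's Lemmas --- but be aware of a point your sketch glosses over: $L_1$ carries no orientation inherited from $L$, so $\sigma(L_1)$ and $\omega$ each depend on the chosen orientation of the disoriented resolution, and you must check that the combination $\sigma(L_1)+\omega$ is independent of that choice before the identity can even be stated. (Murasugi's formula for how signature changes under reversing the orientation of a sublink, in terms of linking numbers, is what makes this work.) A second, minor, imprecision: after resolving, the new nugatory crossings can be loops of the Tait graph as well as bridges (contracting $c$ turns edges parallel to $c$ into loops), so the clean-up step should delete loops and contract bridges; both operations preserve connectedness, so the conclusion stands. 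Finally, make sure the sign convention for $\sigma$ matches the paper's: for the right trefoil the reduced homology sits on $j-2i=2$, so the proposition implicitly uses the convention in which the right trefoil has signature $+2$.
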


As a corollary we note that for an alternating link  $\rkh** L$ is
completely determined by the Jones polynomial and signature.
Lee's result can also be proved using an approach to Khovanov homology
using spanning trees (Wehrli,  \newciteold{0409328}).

\begin{rema}
  The Khovanov homology of the unknot has more structure than that of
  a ring. There are also cobordisms  \raisebox{3.5mm}{\includegraphics[scale=0.03, angle=180]{pants}}  and
 \raisebox{-1.5mm}{\includegraphics[scale=0.1]{cup}}  which
 induce maps at the algebraic level. These maps and the ones above are subject to
 relations determined by the topology of surfaces. The upshot is that
 $U^{**}$ is a Frobenius algebra.
\end{rema}

\subsection{Integral Khovanov homology}
One can also define an integral version of the theory which has
 long exact sequences as above, but some changes are necessary. 
\begin{enumerate}
\item Functoriality is much trickier (see section \ref{subsec:func} below for references)
  \begin{itemize}
  \item up to sign $\pm 1$ everything works okay,
\item strict functoriality requires work.
  \end{itemize}

\item There is a reduced version but
  \begin{itemize}
  \item it is dependent on the component of the marked point,
\item the relationship to the unreduced theory is more complicated and
  is expressed via a long exact sequence,

$$
\xymatrix{
\ar[r]^(.25){\delta} &\irkh i{j+1}  {L,L_\alpha} \ar[r] & \ikh i{j}  L \ar[r] & \irkh {i}{j-1}  {L,L_\alpha} \ar[r]^(.75){\delta} &
}
$$
where $L_\alpha$ is a chosen component of $L$,
\item in this exact sequence the coboundary map $\delta$ is zero modulo 2.
  \end{itemize}
\end{enumerate}

The integral theory is related to the $\bbF_2$-version by a universal
coefficient theorem. There is a short exact sequence:
$$
\xymatrix{
0 \ar[r]  & \ikh i{j}  L \otimes \bbF_2 \ar[r] & \khkh ij {\bbF_2} L \ar[r] & \text{Tor}(\ikh {i+1}{j}  L , \bbF_2)  \ar[r] & 0 
}
$$
There is a similar universal coefficient theorem relating the two
reduced theories.

Any theory defined over the integers has a chance of revealing
interesting torsion phenomena. Unreduced integral Khovanov homology
has a lot of 2-torsion and much of this arises in the passage from
reduced to unreduced coming from that fact that  in the long exact
sequence relating the two theories  the coboundary map is zero mod 2. Correspondingly the reduced theory has much less 2-torsion.

\begin{prop}
 The reduced integral Khovanov homology of alternating links has no
 2-torsion.
\end{prop}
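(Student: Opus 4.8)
The plan is to deduce this formally from the mod-$2$ thinness of non-split alternating links (Proposition~\ref{prop:alternating}) together with the universal coefficient theorem relating the reduced integral and the reduced $\bbF_2$ theories. I take $L$ non-split, as in Proposition~\ref{prop:alternating} (non-splitness is essential here: split alternating links can carry $2$-torsion in their reduced integral Khovanov homology). Fix a marked component, and abbreviate the reduced integral and reduced $\bbF_2$ groups by $\irkh ijL$ and $\rkh ijL$. The two facts I will use, both available above, are: \emph{(i)} $\rkh ijL=0$ unless $j-2i=\sigma(L)$; and \emph{(ii)}, for all $a,b$, the short exact sequence
$$
0 \ra \irkh abL\otimes\bbF_2 \ra \rkh abL \ra \mathrm{Tor}\bigl(\irkh{a+1}{b}L,\bbF_2\bigr)\ra 0 .
$$
The mechanism is that $2$-torsion concentrated in a single bidegree $(i,j)$ feeds the $\bbF_2$ theory in the two bidegrees $(i,j)$ and $(i-1,j)$, which lie on two \emph{different} diagonals $j-2i=\mathrm{const}$, contradicting (i).

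Concretely, suppose $\irkh ijL$ contains a $\bbZ/2^k$ direct summand for some $k\ge1$. Then $\irkh ijL\otimes\bbF_2\neq0$, so the injection in (ii) with $(a,b)=(i,j)$ forces $\rkh ijL\neq0$, whence $j-2i=\sigma(L)$. Also $\mathrm{Tor}(\irkh ijL,\bbF_2)\neq0$, and (ii) with $(a,b)=(i-1,j)$ exhibits this group as a quotient of $\rkh{i-1}{j}L$, so $\rkh{i-1}{j}L\neq0$, whence $j-2(i-1)=\sigma(L)$. Subtracting the two identities gives $0=2$, a contradiction. Hence $\irkh **L$ has no $2$-torsion, for every choice of marked component.

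Granting the two inputs the argument is purely formal, so the real content sits in those inputs rather than in the deduction, and there is no serious obstacle internal to the proof itself. The substantive ingredient is the mod-$2$ thinness of Proposition~\ref{prop:alternating} (the characteristic-$2$ analogue of Lee's thinness theorem for alternating links); the universal coefficient sequence (ii) is standard once one recalls that reduced Khovanov homology is the cohomology of a complex of free abelian groups. The one point that genuinely requires care is the index bookkeeping in (ii): torsion located in bidegree $(i,j)$ is detected by the $\bbF_2$ theory in \emph{both} bidegrees $(i,j)$ and $(i-1,j)$ — through the $\otimes\bbF_2$ and the $\mathrm{Tor}$ terms respectively — and it is exactly because these two bidegrees lie on diagonals $j-2i=\mathrm{const}$ differing by $2$, whereas a thin theory is supported on a single such diagonal, that the contradiction appears; mishandling that shift would wreck the argument.
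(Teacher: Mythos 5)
Your argument is exactly the one the paper gives: assume a $2$-torsion summand in reduced integral Khovanov homology at bidegree $(i,j)$, observe via the universal coefficient sequence that it forces nonzero reduced $\bbF_2$-homology in both bidegrees $(i,j)$ and $(i-1,j)$, and note that these lie on two different diagonals $j-2i=\mathrm{const}$, contradicting the thinness of Proposition~\ref{prop:alternating}. Your version is simply a more spelled-out rendering of the same proof, with the non-splitness hypothesis and the degree shift made explicit.
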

\begin{proof}
 Suppose that $L$ is non-split. Any 2-torsion in $\irkh ijL $ would contribute non-trivial homology in $ \rkhkh ij {\bbF_2} L$  via the leftmost group in the universal coefficient theorem for reduced theory and also in   $ \rkhkh {i-1}j {\bbF_2} L$ via the Tor group. This contradicts the conclusion of Proposition \ref{prop:alternating}, namely that there is only non-trivial homology when $j-2i = \text{signature}(L)$.
 \end{proof}

In general torsion is not very well understood. Calculations (by Alex Shumakovitch) show
\begin{itemize}
\item the simplest knot having 2-torsion in reduced homology has 13 crossings, for example 13n3663,
\item the simplest knot having odd torsion in unreduced homology is $T(5,6)$ which has a copy of $\bbZ /3$ and a copy of $\bbZ /5$,
\item the simplest knot having odd torsion in reduced homology is also $T(5,6)$ which has a copy of $\bbZ /3$,
\item some knots, e.g. $T(5,6)$, have odd torsion in unreduced homology which is not seen in the reduced theory, but the other way around is also possible: $T(7,8)$ has an odd torsion group in reduced that is not seen in unreduced.
\end{itemize}

Torus knots are a very interesting source of odd torsion and in fact
almost all odd torsion observed so far has been for torus knots. There is, however,
an example of a non-torus knot 5-braid which has 5-torsion (Przytycki
and Sazdanovi\'c, \newcite{1210.5254}). In general torsion remains quite a mystery.

\section{Constructing Khovanov homology}
The central combinatorial input in the construction of Khovanov
homology is a hypercube decorated by modules
known variously as ``the cube'', ``the cube of resolutions'' and
``the Khovanov cube'' and  it is constructed from a diagram
representing the link in question. This cube of resolutions is actually an example of something more general: it is a Boolean lattice
equipped with a local coefficient system and this is the point of view we take in these notes.

\subsection{Posets and local coefficient systems}
Let $\bbP$ be a partially ordered set and $R$ a ring. For Khovanov homology the posets of most importance are {\em Boolean lattices}: for a
\begin{wrapfigure}{r}{5.3cm}
\vspace{-5mm}
\includegraphics[scale=0.22]{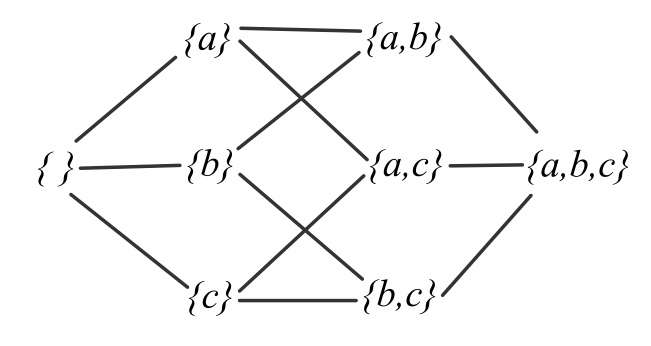}
\vspace{-7mm}
\end{wrapfigure} 
  set $S$, the  {\em Boolean lattice} on $S$ is the poset $\bbB (S)$ consisting of the set of subsets of $S$ ordered by inclusion.  The Hasse diagram of the Boolean lattice $\bbB (\{a,b,c\}) $ is shown here. The {\em Hasse diagram} of a poset $\bbP$ is the graph having vertices the elements of $\bbP$ and an edge \raisebox{-0.8mm}{\includegraphics[scale=0.16]{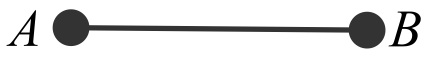}} if and only if $A < B$ and there is no $C$ such that $A<C<B$ (in this case say $B$ {\em
  covers} $A$). We will adopt the pictorial convention that if $A<B$ then
the Hasse diagram features $A$ to the left of $B$. 

A poset $\bbP$ can be regarded as a category with a unique morphism $A\ra B$ whenever $A\leq B$ and a {\em system of local coefficients} for $\bbP$ consists of a (covariant) functor $F\colon \bbP \ra \modR$. 

\begin{exem}\label{ex:kh}
 Let $D$ be an oriented link diagram 
and let $X_D$ be the set of crossings. In this example we will construct a local coefficient system on the boolean lattice $\bbB (X_D)$. We will define a functor $F_D\colon \bbB(X_D) \ra \modR$ first on objects and then on morphisms.
 \begin{itemize}
 \item {\em Objects:}
Let $A\subset X_D$. Near each crossing $c\in X_D$ do ``surgery'' as follows:

\begin{center}
\includegraphics[scale=0.25]{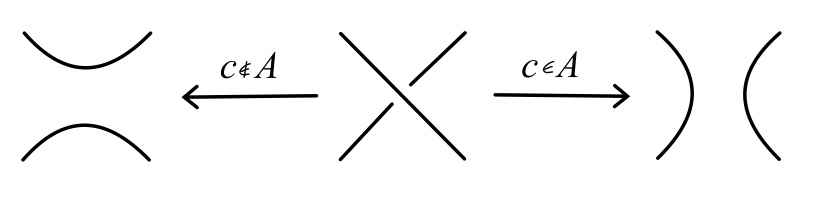}
\end{center} 

The result is a collection of closed circles in the plane and we refer to this as the {\em resolution associated to} $A$. The right-handed trefoil with crossing set $\{1,2,3\}$ has typical resolution as shown below (for the  subset $A=\{2\}$).  
\begin{center}
\includegraphics[scale=0.4]{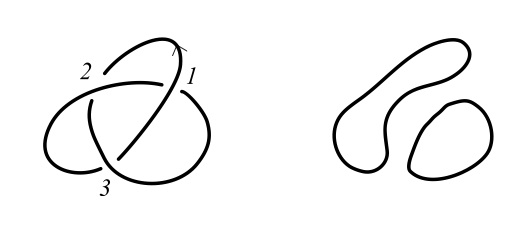}
\end{center} 
Now define $F_D(A)$ to be the truncated polynomial algebra with one generator for each component of the resolution associated to $A$.
$$
F_D(A) = P [x_\gamma \mid \gamma \text{ a component of the resolution associated to $A$}] / ( x_\gamma^2 = 0)
$$
\item  {\em Morphisms:} Suppose that $B$ covers $A$. We must define a map $F_D(A<B) \colon F_D(A) \ra F_D(B)$ which to simplify the notation we will denote by $d_{A,B}$.
By assumption $B$ has exactly one more element than $A$ and in a neighbourhood the additional crossing we see the following local change:

\begin{center}
\includegraphics[scale=0.28]{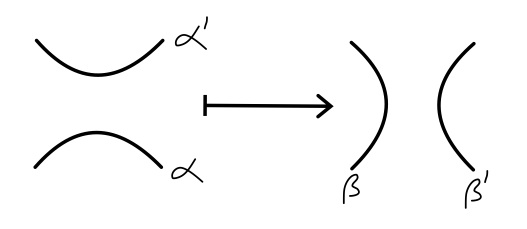}
\end{center} 

There are now two cases.
\begin{itemize}
\item if $\alpha\neq \alpha^\prime$ (in which case we also have $\beta=\beta^\prime$) we define $d_{A,B}$ to be the algebra map defined to be the identity on all generators apart from $x_\alpha$ and $ x_{\alpha^\prime}$ where 
$$
x_\alpha, x_{\alpha^\prime}\mapsto x_\beta, 
$$

\item  if $\alpha= \alpha^\prime$ (in which case we also have $\beta\neq\beta^\prime$) we define $d_{A,B}$ to be the {\em module} map given by
$$
\underline{x}  \mapsto  (x_\beta + x_{\beta^\prime}) \underline{x}  
$$
where $\underline{x} $ is a monomial in which $x_\alpha$ does not  appear, and
$$
 \underline{x} x_\alpha \mapsto \underline{x}x_\beta x_{\beta^\prime}.
$$
 Thus, in particular,  $1\mapsto x_\beta + x_{\beta^\prime}$ and $x_\alpha \mapsto  x_\beta x_{\beta^\prime}$.
\end{itemize}

Finally, if $A<B$ is an arbitrary morphism, decompose it as $A=A_0<A_1<\cdots <A_k=B$ where $A_{i+1}$ covers $A_i$ and define $F_D(A<B)$ to be the composition of the maps $d_{A_i,A_{i+1}}$ defined above. 
\end{itemize}

\begin{exer}
 Check that the definition for an arbitrary morphism is independent of the decomposition into one-step morphisms.
\end{exer}

\end{exem}

The example above is in fact central to the construction of Khovanov homology but there is one embellishment needed, namely that the  local coefficient system takes values in {\em graded} modules. Let $A\subset X_D$ and let $||A||$ denote the number of components in the resolution associated to $A$. The monomial $x_{\gamma_1}\ldots x_{\gamma_m}$ is defined to have grading
$|A| + ||A|| - 2m$. If, for example, $A$ has 5 elements and the
associated resolution has  two components then there are four
generators $1,x_1, x_2$ and $x_1x_2$ of degrees $7, 5, 5$ and $3$
respectively. With these gradings the functor $F_D\colon \bbB(X_D) \ra
\grmodR $ is graded in the sense that morphisms induce maps of graded modules of degree zero.

\subsection{Extracting information from local coefficient systems on Boolean lattices}
Let  $S$ be a set and $F\colon \bbB (S) \ra \grmodR$ a graded local
coefficient system (the assumption is that morphisms induce maps of
degree zero). We can now define a bigraded cochain complex $(\cC^*,d)$ with cochain groups
$$
\cC^{i,j} = \bigoplus_{\substack{A\subset S, |A|=i}} F^j(A).
$$
In order to define a differential $d\colon \cC^{i,j} \ra \cC^{{i+1},j}$ we use
the ``matrix elements'' $d_{A,B}\colon F(A) \ra F(B)$ where $A$ and
$B$ range over subsets of size $i$ and $i+1$ respectively and there is
such a matrix element whenever $B$ covers $A$. Explicitly, for $v\in
F^j(A)\subset \cC^{i,j}$
$$
d(v) = \sum_{\substack{B \text{ covers } A}} d_{A,B}(v).
$$

As it stands $d^2$ is zero only mod 2 but this can be rectified by
introducing a {\em signage}  function $\epsilon\colon \{\text{edges}\}
\ra \bbZ /2$ which satisfies $\epsilon(e_1) +\epsilon(e_2)
+\epsilon(e_3) +\epsilon(e_4)=1$ mod 2 whenever $e_1, \cdots, e_4$ are
the four edges of a square in the Hasse diagram.The definition above
is modified to read
$$
d(v) = \sum_{\substack{B \text{ covers } A}} (-1)^{\epsilon(A,B)}d_{A,B}(v).
$$

 This will give $d^2=0$ over any ring and a different choice of
 signage will give an isomorphic complex.

\begin{defi}
 Let $L$ be an oriented link and let $D$ be a diagram representing $L$
 having $n$ crossings of which $n_-$ are negative and  $n_+$ positive. Let $F_D \colon
 \bbB (X_D) \ra \grmodR$  be 
as defined in Example \ref{ex:kh}. Applying the above construction
gives a bigraded cochain complex $\cC^{*,*}(D)$.  The {\em Khovanov
  homology} of $L$ with coefficients in the ring $R$ is the (shifted)
homology of this complex:
$$
\prekh^{i,j} (D) = H^{i+n_-, j+2n_- - n_+}((\cC^{*,*}(D), d)
$$
\end{defi}

\begin{rema} 
The shifts by $n_-$ and $2n_- - n_+$ are global
  shifts needed in order to obtain an invariant (see Theorem
  \ref{thm:inv} below) in much the same way as the Kauffman bracket
  formulation of Jones polynomial requires an additional factor
  depending on the writhe of the diagram used.
\end{rema}
\begin{rema}
 For $A\subset X_D$, the monomial
 $x_{\gamma_1}\ldots x_{\gamma_m} \in F(A)$ defines a cochain in bidegree
$$
(|A| - n_-, |A| + ||A|| - 2m -2n_- + n_+)
$$ 
\end{rema}

\begin{exer}
  Read sections 3.1 and 3.2  of {\em On Khovanov's
    categorification of the Jones polynomial} (Bar-Natan, \newciteold{0201043}) and/or
  sections 2.1 and 2.2 of {\em Khovanov homology theories and their
    applications} (Shumakovitch, \newcite{1101.5614}) and marry the description of the construction of Khovanov homology with what is written above.
\end{exer}

This construction appears to depend on the diagram, but Khovanov's
first main result is that it doesn't (Khovanov, \newciteold{9908171}, see also
Bar-Natan, \newciteold{0201043}).
\begin{theo}\label{thm:inv}
 Up to isomorphism the definition above does not depend on the choice of diagram representing the link.
\end{theo}

\begin{rema}
  In fact one need not go all the way to taking homology: the cochain complex itself is an invariant up to homotopy equivalence of complexes.
\end{rema}

\begin{exer}
Using the construction above show that  $\prekh(L_1\sqcup L_2) \cong \prekh(L_1) \otimes \prekh(L_2)$.
\end{exer}

\begin{exer}
Show that if $L$ is presented by a diagram part of which is $ \uox$ then there is a short exact sequence of complexes
$$
\xymatrix{
0 \ar[r] & \cC^{*-1}(\uoresone) \ar[r] &  \cC^*(\uox) \ar[r] & \cC^*(\uoreszero) \ar[r] & 0
}
$$
Verify that the induced long exact sequence has gradings as presented
in Theorem \ref{thm:existence}.
\end{exer}

\subsection{Functoriality}\label{subsec:func}
The existence theorem asserts that there is a {\em functor} from a
category of links and  link cobordisms to modules. Courtesy of the
construction in the previous section we know how to define this
functor on links, but what is still needed is how to define module
maps associated to link cobordisms. 

\begin{exer}
 Find out how link cobordisms are represented by {\em movies} and how
 to associate maps in Khovanov homology to such things (by looking in the
 papers cited below for example).
\end{exer}

Because of the dependence on diagrams there are things to check. One can show that up to an over
all factor of $\pm 1$ there is no dependence of the maps on the diagrams
chosen. This is enough to give a functor over $\bbF_2$. The papers
showing functoriality up to $\pm 1$ are
\begin{itemize}
\item {\em An invariant of link cobordisms from Khovanov homology}
  (Jacobsson, \newciteold{0206303})
\end{itemize}
\begin{itemize}
\item {\em An invariant of tangle cobordisms} (Khovanov, \newciteold{0207264})
\end{itemize}
and
\begin{itemize}
\item {\em Khovanov's homology for tangles and cobordisms} ( Bar-Natan, \newciteold{0410495})
\end{itemize}

It is hard work to remove the innocent looking ``up to $\pm 1$'' and
something additional is needed to make it work. One approach is to using Bar-Natan's
local geometric point of view (see section \ref{sec:bnlocal} below)
\begin{itemize}
\item {\em Fixing the functoriality of Khovanov homology}  (Clark, Morrison and Walker, \newciteold{0701339})
\end{itemize}
which requires working over $\bbZ [i]$. A somewhat similar point of
view is developed in
\begin{itemize}
\item {\em An $sl(2)$ tangle homology and seamed cobordisms} (Caprau, \newcite{0707.3051})
\end{itemize}
A different construction working over $\bbZ$ is in 
\begin{itemize}
\item {\em An oriented model for Khovanov homology} (Blanchet,  \newcite{1405.7246})
\end{itemize}

\subsection{Aside for algebraic topologists: another extraction technique}
There is another, more abstract, way of extracting information from
the cube. To motivate this kind of approach think about the
definitions of group cohomology where one can either define an explicit cochain complex using the bar
resolution or use derived functors. Each approach has
its uses and if the definition is taken to be the explicit complex
then the derived functors approach becomes an 
``interpretation'', but if the definition is in terms of derived
functors then the explicit complex becomes a
``calculation''.

 There is a way of defining cohomology of posets
equipped with coefficient systems by using the right derived functors
of the inverse limit. With a
small modification to the underlying Boolean lattice, this gives an alternative way of getting
Khovanov homology. Let $Q$ be the poset formed from
$\bbB (X_D)$ by the addition of a second minimal element. Extend the
functor $F_D$ to $Q$ by sending this new element to the trivial
group. Khovanov homology can be interpreted as the right derived functors
of the inverse limit functor (Everitt and Turner, \newcite{1112.3460}): 
$$
\prekh (D) \cong  R^* \invlim_Q F_D
$$

\section{Odd Khovanov homology}
The construction of Khovanov homology makes no demands on the order of the
circles appearing in a resolution. At the algebraic level this is
reflected in the polynomial variables commute among
themselves.  If one could impose a local ordering of strands near
crossings then one might hope that this commutativity requirement
could be removed. The subject of {\em odd} Khovanov homology is one
approach to achieving this. The defining paper is
\begin{itemize}
\item   {\em Odd Khovanov homology} (Ozsv\'ath, Rasmussen and Szab\'o, \newcite{0710.4300})
\end{itemize}
and there is also a nice expository article
with many calculations 
\begin{itemize}
\item {\em Patterns in odd Khovanov homology} (Shumakovitch, \newcite{1101.5607})
\end{itemize}

The construction of odd Khovanov homology is a refinement of the construction of (ordinary) Khovanov homology given in the last section.
Let $D$ be an oriented link diagram and let $X_D$ be the set of crossings. We will construct a local coefficient system  $F^{\text{odd}}_D\colon \bbB (X_D)\ra \modR$  on the boolean lattice $\bbB (X_D)$.
 \begin{itemize}
 \item {\em Objects:}
Let $A\subset X_D$. Near each crossing $c\in X_D$ replace the crossing according to the following two rules  

\begin{center}
\includegraphics[scale=0.25]{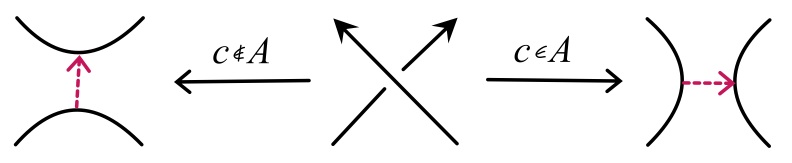}
\end{center} 

\begin{center}
\includegraphics[scale=0.25]{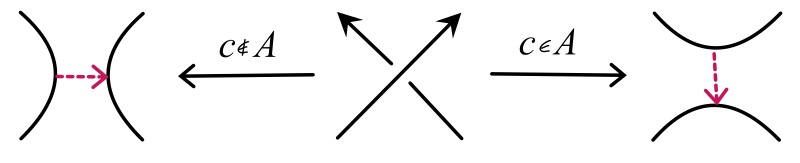}
\end{center} 

The result is a collection of closed circles in the plane with a number of additional dotted arrows and we refer to this as the {\em odd resolution associated to} $A$. The right-handed trefoil with crossing set $\{1,2,3\}$ has typical resolution as shown here (for the  subset $A=\{1,3\}$).  
\begin{center}
\includegraphics[scale=0.4]{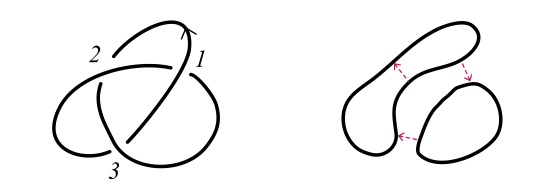}
\end{center} 
Note that if presented with a local piece around a crossing for which the two strands are from different components we may order these by the decree: tail before head. This does not give a global ordering on the circles. 

Now define $F^{\text{odd}}_D(A)$ to be the exterior algebra with one generator for each component of the odd resolution associated to $A$.
$$
F^{\text{odd}}_D(A) = \Lambda_A = \Lambda [x_\gamma \mid \gamma \text{ a component of the odd resolution associated to $A$}]
$$
\item  {\em Morphisms:} Suppose that $B$ covers $A$. We must define a map $d^{\text{odd}}_{A,B} \colon F^{\text{odd}}_D(A) \ra F^{\text{odd}}_D(B)$. 
By assumption $B$ has exactly one more element than $A$ and in a neighbourhood the additional crossing we see the following local change:

\begin{center}
\includegraphics[scale=0.28]{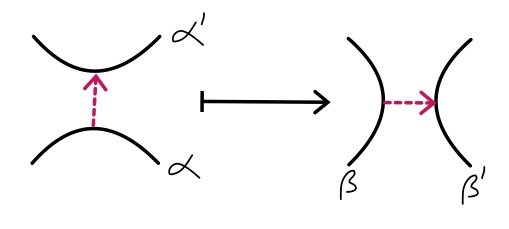}
\end{center} 

There are now two cases.
\begin{itemize}
\item if $\alpha\neq \alpha^\prime$ (in which case we also have $\beta=\beta^\prime$) we define $d_{A,B}$ to be the algebra map given by
$$
1\mapsto 1, \;\;\;\;\;\; x_\alpha, x_{\alpha^\prime} \mapsto x_\beta, \;\;\;\;\;\; x_\gamma \mapsto x_\gamma \text{ for $\gamma \neq \alpha, \alpha^ \prime$}
$$
\item  if $\alpha= \alpha^\prime$ (in which case we also have $\beta\neq\beta^\prime$) we define $d_{A,B}$ to be the {\em module} map given by
$$
x_\alpha \wedge v \mapsto x_\beta\wedge x_{\beta^\prime} \wedge v, \;\;\;\;\;\; v \mapsto  (x_\beta -x_{\beta^\prime})\wedge v
$$
where $x_\alpha$ is assumed not to appear in $v$. Thus $1\mapsto x_\beta -x_{\beta^\prime}$ and $x_\alpha \mapsto  x_\beta\wedge x_{\beta^\prime}$.
\end{itemize}
The first of these makes no use of the local ordering, but in the
second the asymmetry is very clear.
\end{itemize}

\begin{exer}\label{ex:modtwoodd}
 Check that if the underlying ring is the
field $\bbF_2$ then the exterior algebra is isomorphic to the truncated
polynomial algebra and the matrix element maps $d^{\text{odd}}_{A,B}$ agree with the ones used  in
construction of  (ordinary) Khovanov homology. 
\end{exer}

For ordinary Khovanov homology the construction  gives a functor $\bbB
(X_D) \ra \modR$ without further trouble. Or, put differently, the
square faces of the cube commute. (Immediately afterwards a sign
assignment is made, but that is to turn commuting squares into
anti-commuting ones which is only necessary because of the particular extraction technique used to obtain a complex out of the functor.) Here, for odd Khovanov homology things are not so simple and there is not obviously functor $\bbB (X_D) \ra \modR$; some squares commute, others anti-commute and others still produce maps which are zero. After a fair bit of digging into the possible cases Ozsv\'ath-Rasmussen-Szab\'o prove:

\begin{prop}
  There exists a signage making all squares commute. 
\end{prop}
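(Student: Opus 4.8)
The plan is to recast the existence of such a signage as the vanishing of a cohomological obstruction, and then to reduce that vanishing to a finite local check on three crossings at a time; the cohomology is soft, the case analysis is where the work lies.

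\textbf{Setting up the obstruction.} Regard the Hasse diagram of $\bbB(X_D)$ together with its square and cubical faces as the skeleta of the $n$-cube, $n=|X_D|$, a contractible space, so $H^2(-;\bbZ/2)=0$. A signage is a $1$-cochain $\epsilon\in C^1$, written multiplicatively with $\epsilon(e)\in\{\pm1\}$, and twisting each matrix element $d^{\text{odd}}_{A,B}$ by $\epsilon(A,B)$ multiplies the composition along any two-step path by the product of $\epsilon$ over its two edges. Before twisting, the case inspection described below shows that for every square $s$, with vertices $A<A\cup p<A\cup pq$ and $A<A\cup q<A\cup pq$ (here $p,q\notin A$), the two length-two compositions are either equal, or negatives of one another, or both zero; set $\eta(s)=+1$ in the first case, $\eta(s)=-1$ in the second, and (arbitrarily, say $+1$) in the third. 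Then the $\epsilon$-twisted faces all commute exactly when $\epsilon(e_1)\epsilon(e_2)\epsilon(e_3)\epsilon(e_4)=\eta(s)$ for every square where $\eta$ is forced, i.e. when the $2$-cochain $\eta$ (with the arbitrary choices filled in on the ``both zero'' squares, where any $\epsilon$ works) equals $\delta\epsilon$.

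\textbf{Reducing to $3$-cubes.} Since $H^2$ of the cube vanishes, a $2$-cochain is a coboundary as soon as it is a cocycle, and the cocycle condition $\delta\eta=0$ is checked one $3$-cell at a time: for each $3$-dimensional face — equivalently, each choice of a subset $A$ together with three further crossings $p,q,r$ — the product of $\eta$ over the six $2$-faces of that $3$-cube must be $+1$, where on any $3$-cube that has a ``both zero'' face one is free to choose $\eta$ there to make the product come out right. Note also that $\eta(s)$ depends only on the diagram near the crossings being resolved, so there are only finitely many local configurations to examine, further cut down by the possibilities for how the strands at $p,q,r$ merge, split or reconnect the circles of the resolution.

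\textbf{The case analysis.} Finally I would enumerate the local pictures of two and then three crossings and the merge/split/reconnect patterns they induce, compute the relevant compositions of the maps defining $F^{\text{odd}}_D$ (using ``tail before head'' to fix the local orderings), read off $\eta$ for each $2$-face, and verify the six-fold product is $+1$ on every $3$-cube. Keeping track of how the dotted-arrow orderings propagate through double and triple resolutions, and of the signs in $x_\alpha\wedge v\mapsto x_\beta\wedge x_{\beta'}\wedge v$ versus $v\mapsto (x_\beta-x_{\beta'})\wedge v$, is the real labour and the main obstacle; the cohomological part is just packaging. Exercise~\ref{ex:modtwoodd} gives a sanity check, since mod $2$ everything must collapse onto the already-commuting ordinary Khovanov cube, forcing every $\eta(s)$ to be $+1$ modulo $2$ and hence the whole $3$-cube identity to be automatic there.
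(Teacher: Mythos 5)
The paper does not give a proof: it attributes the proposition to Ozsv\'ath, Rasmussen and Szab\'o and merely notes that it requires ``a fair bit of digging into the possible cases.''

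Your overall plan---recast the signage problem as a $2$-coboundary problem on the $n$-cube, use $H^2(\text{cube};\bbZ/2)=0$ to replace ``coboundary'' by ``cocycle,'' and check the cocycle condition one $3$-cube at a time by a finite case analysis---is indeed the shape of the ORS argument, and you correctly identify the case analysis as the real labour. However, there is a genuine gap in your handling of the ``both zero'' faces. You first declare $\eta(s)=+1$ on such faces and then later assert that on each $3$-cube you are ``free to choose $\eta$ there to make the product come out right''; these are incompatible. Worse, even if $\eta$ is treated as a free variable on ``both zero'' faces, a given such $2$-face lies in several $3$-cubes, and the requirements those $3$-cubes impose on it may conflict. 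The existence of a globally consistent choice is therefore not a $3$-cube-local statement, and your reduction does not close. What Ozsv\'ath, Rasmussen and Szab\'o actually do is \emph{not} leave the sign on ``both zero'' faces arbitrary: they classify these faces into two combinatorial subtypes according to the local pattern of circles in the resolution, assign a definite sign to each subtype, and only then run the $3$-cube case analysis with a fully determined $\eta$. Your sketch is missing that classification, and without it the verification you propose would break down on $3$-cubes containing ``both zero'' faces.
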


This gives a functor $F^{\text{odd}}_D\colon \bbB (X_D)\ra \modR$ and by
one of the extraction techniques discussed previously this yields a
complex whose homology defines {\em odd Khovanov homology}, denoted
 $\khodd ** (D;R)$.

 \begin{rema}
  The bigrading is as follows: for $A\subset X$, the $m$-form $x_{\gamma_1} \wedge \cdots \wedge x_{\gamma_m} \in
  \Lambda^m\subset \Lambda_A$ defines a cochain with bigrading
$$
(|A| -n_-, |A| + || A|| - 2m  - 2n_- +n_+ )
$$ 
where as before $||A||$ is the number of circles in the odd resolution defined by $A$.
 \end{rema}

Odd Khovanov homology shares many properties of ordinary Khovanov homology, but there are some crucial differences. Here is a summary of some of its properties:
\begin{itemize}
  \item there are skein long exact sequences precisely as for ordinary
    Khovanov homology (with the same indices),
\item the Jones polynomial is obtained as
$$
\sum_{i,j} (-1)^i q^j \text{dim}(\khodd i j (L))|_{q=-t^{\frac{1}{2}}}
= -(t^{\frac{1}{2}} + t^{-\frac{1}{2}}) J(D),
$$
\item there is a reduced version, $\khoddr **$, satisfying
$$
\khodd i j \cong \khoddr i {j+1} \oplus \khoddr i {j-1}
$$
and which does not depend on the component of the base-point. So
$\prekhodd$ stands in the same relationship to $\prekhoddr$ as
$\prekh_{\bbF_2}$ to $\prerkh_{\bbF_2}$ which is very different to
the relationship between $\preikh$ and $\preirkh$,
\item over $\bbF_2$ odd
and ordinary Khovanov homology coincide (reduced and unreduced); this is  courtesy of Exercise \ref{ex:modtwoodd},
\item $\khoddr ** (\text{alternating})\cong \prerkh^{**}
  (\text{alternating})$ but in general $\prerkh$ neither determines or
  is determined by $\prekhoddr$.
\end{itemize}

\begin{rema}
 The is a spectral sequence with $E_2$-page $\prekh_{\bbF_2}(L^{!})$
    converging to the Heegaard-Floer homology of the
    double branched cover branched along $L$ (Ozsv\'ath and Szab\'o, \newciteold{0309170} ). 
    To lift this
    integrally the correct theory to put at $E_2$ is (conjecturally)
    odd integral Khovanov homology.
 Indeed this was one of the
    motivations for the invention of odd Khovanov homology. 
\end{rema}

\begin{rema}
 There are other interesting spectral sequences featuring odd
 Khovanov homology at the $E_2$-page.  There is
  one starting with odd Khovanov homology 
and converging to an
  integral version of a theory made by Szabo (Beier, \newcite{1205.2256}).
Another starts with odd Khovanov homology
  and converging to the framed instanton homology of the double
  cover (Scaduto, \newcite{1401.2093}  ). 
\end{rema}

\begin{rema}
 On seeing a typical odd resolution it is tempting to re-draw it as a graph whose
  vertices are the circles and whose directed edges are the dotted arrows. There is
  a description of odd Khovanov homology in  terms of {\em
    arrow graphs} (Bloom, \newcite{0903.3746}).
\end{rema}

\section{Tangles}
We now return to (ordinary) Khovanov homology. The topology of the resolutions of a  link diagram requires knowledge of the whole diagram and this is used in the construction of Khovanov homology (circles fuse or split depending on global information). None the less diagrams are made up of more basic pieces, namely tangles, and so it is natural to ask if Khovanov homology may be defined more locally. The difficulty is that while piecing together geometric data is easy, doing the same with algebraic data is never so simple.

\subsection{Khovanov's approach}
The first approach is due to Khovanov who studies $(m,n)$-tangles  such as the one shown here (Khovanov, \newciteold{0103190}).
%\begin{wrapfigure}{l}{3.9cm}
%\vspace{-3mm}
\begin{center}
\includegraphics[scale=0.14]{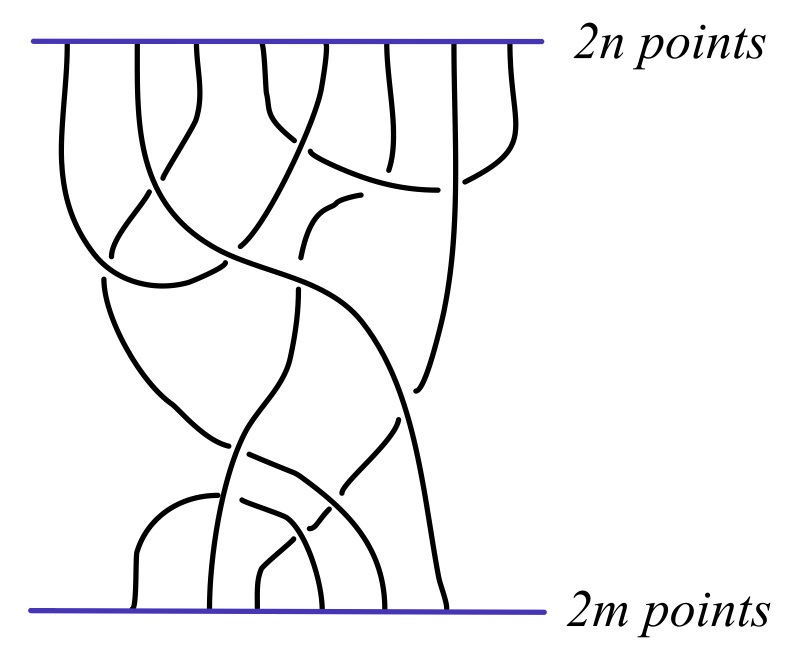}
\end{center}
%\vspace{-3mm}
%\end{wrapfigure}   
%\begin{wrapfigure}{r}{1cm}
%\vspace{-3mm}
%\includegraphics[scale=0.08]{closures}
%\vspace{-3mm}
%\end{wrapfigure}    
 For such a diagram $T$ there is a cube of resolutions (in our language: a local coefficient system   on the Boolean lattice on crossings) as before. To $A\subset X_T$ one
 associates 
$$
%\;\;\;\;\;\;\;\;\;\;\;\;\;\;\;\;\;\;\;\;\;\;\;\;\;\;\;\;\;\;\;\;\;\;\;\;\;\;\;\;\;\;\;\;\;\;\;\;\;\;\;\;\;\;\;\;\;\;\;\;\;\;\;\;
M_A = \bigoplus P[x_\gamma \mid \gamma \text{ a circle }]/ (x^2_\gamma =0)
$$
where the direct sum is over all tangle closures.
% which pair up all the strands appearing at the top and also pairs up all the strands appearing at the bottom. 
%A typical closure looks something like this:
Each $M_A$  is an $(H^m,H^n)$-bimodule where $\{H^i\}$ is a certain family of
rings (with elements the top-bottom closures of a set of $2i$ points; these rings can also be related to parabolic category $\mathscr O$ (Stroppel, \newciteold{0608234})). By the usual extraction of a complex from a cube this yields a
complex of bi-modules $\cC (T)$. When $m=n=0$ one recovers the usual
Khovanov complex. Isotopic tangles produce complexes that are homotopy equivalent and the construction is functorial (up to $\pm 1$) with respect to tangle cobordisms (Khovanov, \newciteold{0207264} ). 

The key new property is that by using the bi-module structure tangle composition can be captured algebraically.

\begin{prop} Let $T_1$ be a $(m,n)$-tangle and $T_2$ a $(k,m)$-tangle. Then,
  $$\cC (\raisebox{-10.7mm}{\includegraphics[scale=0.22]{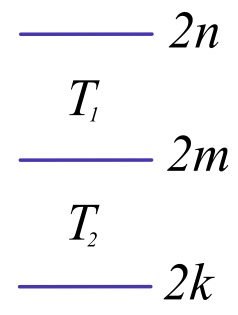}}) \simeq \cC (T_2) \otimes_{H^m} \cC(T_1)$$
\end{prop}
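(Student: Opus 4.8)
The plan is to reduce the statement to a purely combinatorial fact about the cube of resolutions by observing that gluing two tangle diagrams glues their crossing sets and hence their Boolean lattices. Concretely, if $T_1$ has crossing set $X_1$ and $T_2$ has crossing set $X_2$, then the composite diagram $T_2\circ T_1$ has crossing set $X_1\sqcup X_2$, and $\bbB(X_1\sqcup X_2)\cong \bbB(X_1)\times\bbB(X_2)$ as posets. So a vertex of the big cube is a pair $(A_1,A_2)$ with $A_i\subset X_i$, and the resolution it indexes is obtained by stacking the resolution of $T_1$ at $A_1$ on top of the resolution of $T_2$ at $A_2$. First I would make precise, following Khovanov's setup, how the bimodule $M_{(A_1,A_2)}$ attached to a vertex of the composite is recovered from $M_{A_1}$ (an $(H^m,H^n)$-bimodule) and $M_{A_2}$ (an $(H^k,H^m)$-bimodule): when you stack the flat tangle $T_2(A_2)$ on top of $T_1(A_1)$ and then take all closures, the circles you see are exactly the circles of the two pieces together with the circles created along the $m$ glued points, and this is precisely the recipe computing $M_{A_2}\otimes_{H^m} M_{A_1}$ as a module (the ring $H^m$ keeps track of the $2m$-point closure data in the middle, and tensoring over it performs the gluing and the relations that merge circles meeting the middle). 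This is the content of Khovanov's ``arc algebra'' multiplication, so I would cite or reproduce that local calculation rather than redo it.

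Next I would check that this identification of vertex modules is compatible with the cube differentials. The differential on $\cC(T_2\circ T_1)$ at a vertex $(A_1,A_2)$ is a sum over edges, and each edge increases exactly one of $A_1$ or $A_2$ by one element; an edge in the $X_1$-direction induces $\mathrm{id}_{M_{A_2}}\otimes_{H^m} d_{A_1,A_1'}$ and an edge in the $X_2$-direction induces $d_{A_2,A_2'}\otimes_{H^m}\mathrm{id}_{M_{A_1}}$, because the surgery that realises the edge happens entirely within one of the two stacked pieces and is the identity on the other. Up to the sign assignment this says exactly that, as a complex of $(H^k,H^n)$-bimodules, $\cC(T_2\circ T_1)$ is the total complex of the double complex obtained by tensoring $\cC(T_2)$ and $\cC(T_1)$ over $H^m$ — i.e. $\cC(T_2)\otimes_{H^m}\cC(T_1)$. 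I would then remark that the two sign conventions (the signage chosen on the product cube versus the product of the signages on the factor cubes) differ by an isomorphism of complexes, exactly as in the remark after the signage definition in Section 2, so they give the same complex up to the canonical isomorphism. Finally, invoking Theorem~\ref{thm:inv} (diagram-independence up to homotopy equivalence) and its tangle analogue cited above, the homotopy type of each side depends only on the isotopy class, so the equivalence $\simeq$ in the statement is well posed and follows from the chain-level identification.

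The main obstacle is the local bimodule computation at a single vertex: verifying that stacking two flat tangles and taking closures literally computes the tensor product $M_{A_2}\otimes_{H^m} M_{A_1}$ over the arc algebra $H^m$, including the subtle point that circles passing through the gluing region get identified correctly and that the $H^m$-relations kill precisely the ``extra'' generators. This is where Khovanov's explicit description of $H^m$ and its bimodules does the real work, and everything else — the poset splitting $\bbB(X_1\sqcup X_2)\cong\bbB(X_1)\times\bbB(X_2)$, the compatibility of differentials, the sign bookkeeping, and the passage from chain complexes to homotopy equivalence — is then essentially formal. A secondary annoyance, easy to underestimate, is keeping the bidegree shifts straight: the global shifts by $n_\pm$ for the composite diagram split as the sum of the shifts for $T_1$ and $T_2$ since $n_\pm(T_2\circ T_1)=n_\pm(T_1)+n_\pm(T_2)$, so the gradings match on the nose, but this should be stated explicitly.
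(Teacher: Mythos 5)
The paper does not actually supply a proof of this proposition; it is stated as a known result and attributed to Khovanov's paper \emph{A functor valued invariant of tangles} (arXiv:math/0103190), so there is no in-paper argument to compare against. Your sketch follows the line of Khovanov's original proof and is correct in outline: factor the Boolean lattice $\bbB(X_1\sqcup X_2)\cong\bbB(X_1)\times\bbB(X_2)$, identify the vertex bimodule of the glued resolution with $M_{A_2}\otimes_{H^m}M_{A_1}$, observe that each cube edge acts on only one factor, handle the Koszul sign via the freedom in the signage, and note that the global degree shifts add because $n_\pm$ is additive under composition.

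Two small points you should make explicit if you write this out. First, the identification $M_{(A_1,A_2)}\cong M_{A_2}\otimes_{H^m}M_{A_1}$ is a genuine theorem about the arc ring (it is Khovanov's gluing result for \emph{flat} tangles, proved by an explicit analysis of the $H^m$-action via saddle cobordisms in the middle strip), not a tautology about ``circles meeting the gluing region''; you correctly flag this as the hard step, but the phrasing ``literally computes'' undersells it. Second, for the edge maps to become $\mathrm{id}\otimes d$ and $d\otimes\mathrm{id}$ you need the vertex isomorphisms to be \emph{natural} with respect to the cube maps, i.e.\ the relevant squares commute; this is not a separate observation but a part of Khovanov's flat-tangle argument that must be carried along, and it is worth stating so that the passage to the total complex of the double complex is airtight. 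With those caveats your argument reproduces the standard proof.
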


\subsection{Bar-Natan's approach}\label{sec:bnlocal}
A  different approach to locality is due to Bar-Natan as his viewpoint
has turned out to be very influential (Bar-Natan, \newciteold{0410495})  . The functor $F_D\colon \bbB
(X_D) \ra \grmodR$ comes from  two step process: firstly make 
resolutions (which are geometric objects) and secondly associate to to
these some algebraic data. The first step can be re-cast as  a functor
from $\bbB (X_D)$ to a cobordism category and the second step consists
of applying a 1+1-dimensional TQFT to the first step.  Bar-Natan's
central idea is to work with the ``geometric'' functor (or {\em cube}) as long as possible delaying the application of the TQFT.
$$
\xymatrix{
F_D\colon \bbB (X_D) \ar[rrr] &&& \text{Cob}_{1+1} \ar[rrr]^{\text{TQFT}} &&& \grmodR
}
$$
Distilling the essential operations used to construct a cochain
complex from the functor $F_D$ one sees that we needed to 1) take direct
sums of vector spaces (in the step often referred to as ``flattening
the cube''), and 2) assemble a linear map out of the matrix elements
which involved taking linear combinations of maps between vector
spaces. In order to delay the passage to the algebra and to build some
notion of ``complex'' in the setting of a cobordism category we need
some equivalent of these two operations.
What is done is to replace direct sum by the operation of taking
formal combinations of objects (closed 1-manifolds) and allowing
linear combinations of cobordisms. A typical morphism will be a matrix
of formal linear combinations of cobordisms. In this way it is
possible to define a ``formal'' complex $\leftsq D \rightsq $
associated to $D$.  It is no longer possible to take homology of such
formal complexes because we are working in a non-abelian category (the
kernel of a linear combination of cobordisms makes no sense, for
example) but one still has the notion of homotopy equivalence of
formal complexes and indeed if $D\sim D^\prime$ then $\leftsq D
\rightsq \simeq \leftsq D^\prime \rightsq$.

This approach works perfectly well for tangles too. Given a tangle $T$
of the type shown below a resolution will typically involved
1-manifolds with- and without-
\begin{wrapfigure}{l}{3.9cm}
\vspace{-5mm}
\includegraphics[scale=0.19]{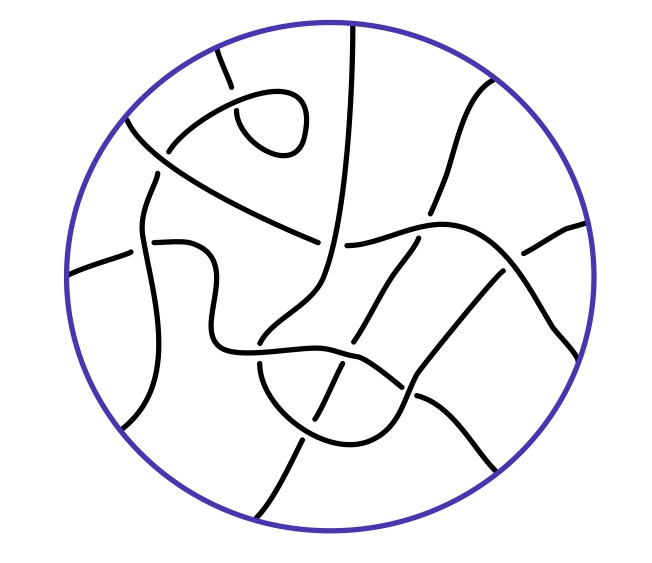}
\vspace{-9mm}
\end{wrapfigure}  
 boundary  and the cobordism category must
be adapted appropriately but a formal complex $\leftsq T \rightsq$ may
be constructed as above. Things are as they should be because given
isotopic tangles $T_1$ and $T_2$ then there is a equivalence of formal
complexes $\leftsq T_1 \rightsq \simeq \leftsq T_2 \rightsq$. Moreover
this construction is functorial (up to $\pm 1$) with respect to tangle
cobordisms.

But now comes the beauty of this approach: by insisting on staying on
 the geometric side of the street for so long, the composition of
 tangles is accurately reflected at the level of formal complexes as
 well. The combinatorics of tangle composition is captured by the
%\begin{wrapfigure}{l}{3.9cm}
%\vspace{-5mm}
%\includegraphics[scale=0.19]{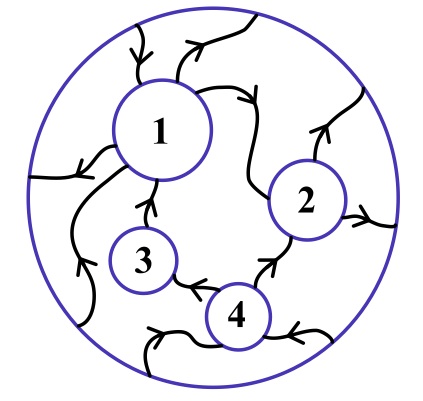}
%\vspace{-9mm}
%\end{wrapfigure}   
 notion of a {\em planar algebra}:  to each $d$-input arc diagram $D$
 like the one shown here 
\begin{center}
\includegraphics[scale=0.28]{arcdiag}
\end{center}

\noindent
there is an operation
$$
D\colon \tang \times \cdots \times \tang \ra \tang
$$
defined by plugging the holes. For example
\begin{center}
\includegraphics[scale=0.28]{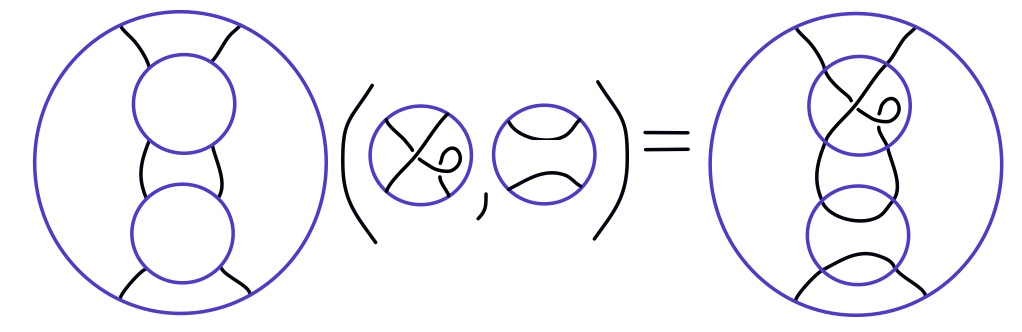}
\end{center} 

These operations are subject to various composition criteria that make up the structure of a planar algebra. The category $\tang$ is very naturally a planar algebra, but other categories may admit the structure of a planar algebra too - all that is needed is operations of the type above. The category of formal complexes above (the one in which $\leftsq T \rightsq$ lives) is an example - for the details of the construction you should read Bar-Natan's paper. 

\begin{prop}
 The construction sending a tangle $T$ to the associated formal complex $\leftsq T \rightsq$ respects the planar algebra structures defined on tangles and formal complexes. (In other words $\leftsq - \rightsq$ is a morphism of planar algebras)
\end{prop}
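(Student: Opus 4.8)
The plan is to unwind both sides of the claimed identity $\leftsq D(T_1,\dots,T_d)\rightsq \simeq D\bigl(\leftsq T_1\rightsq,\dots,\leftsq T_d\rightsq\bigr)$ at the level of cubes of resolutions and to check that they agree essentially on the nose. The crucial observation is that a $d$-input arc diagram $D$ contributes no crossings of its own: plugging the tangles $T_i$ into the holes of $D$ produces a tangle whose crossing set is the disjoint union $X_{T_1}\sqcup\cdots\sqcup X_{T_d}$. Since subsets of a disjoint union are the same data as tuples of subsets of the parts, the Boolean lattice governing the cube of $D(T_1,\dots,T_d)$ splits as a product, $\bbB(X_{T_1}\sqcup\cdots\sqcup X_{T_d})\cong \bbB(X_{T_1})\times\cdots\times\bbB(X_{T_d})$; a vertex of the big cube is precisely a tuple $(A_1,\dots,A_d)$ with $A_i\subset X_{T_i}$.

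First I would check the object level. Forming a resolution is a purely local operation at crossings, and plugging tangles into an arc diagram only joins up boundary arcs without creating or destroying crossings; hence the $1$-manifold the cube of $D(T_1,\dots,T_d)$ assigns to the vertex $(A_1,\dots,A_d)$ is obtained by plugging the $A_i$-resolution of $T_i$ into the $i$-th hole of $D$. That is exactly $D$ applied to the corresponding vertices of the cubes of the $T_i$. Next I would check the morphism level: an edge of the product cube changes exactly one coordinate $A_i$ at a single crossing of $T_i$, and the associated cobordism is the elementary saddle for that crossing performed in parallel with identity cobordisms on the resolutions sitting in the other holes --- again nothing but the planar operation $D$ applied to one elementary saddle and $d-1$ identities. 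Thus, as a functor out of a product of Boolean lattices, the cube of $D(T_1,\dots,T_d)$ is literally the ``external product'' of the cubes of the $T_i$ pushed through the planar operation $D$.

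It then remains to pass from cubes to formal complexes, reconciling the two ways of flattening. On one side, $\leftsq D(T_1,\dots,T_d)\rightsq$ is assembled from the product cube by the usual recipe, which uses a choice of signage on the edges of $\bbB(X_{T_1}\sqcup\cdots\sqcup X_{T_d})$. On the other side, $D\bigl(\leftsq T_1\rightsq,\dots,\leftsq T_d\rightsq\bigr)$ is, by the very definition of the planar algebra structure on the category of formal complexes, the total complex of the multicomplex obtained by applying $D$ degreewise, built with the Koszul sign rule from signages chosen on the factors. These two signages on a product of Boolean lattices differ only by one of the admissible reassignments, and the excerpt already records that different signages yield isomorphic complexes; so the two sides are canonically isomorphic, in particular homotopy equivalent. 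This identification is natural in the $T_i$, so $\leftsq -\rightsq$ is a morphism of planar algebras.

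I expect the main obstacle to be bookkeeping rather than ideas: pinning down precisely the planar algebra structure Bar-Natan puts on the category of formal complexes --- the ``tensor product of complexes'' in a cobordism category, with its sign conventions and the well-definedness of the $d$-fold total complex --- and then matching those conventions against the signage used to build $\leftsq D(T_1,\dots,T_d)\rightsq$ straight from its cube. Once one commits to compatible conventions the statement is close to a tautology, but one still has to verify the coherence data --- compatibility with composition of arc diagrams and with the identity arc diagram --- which is a finite if slightly tedious run through the planar operad axioms.
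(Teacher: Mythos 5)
Your proposal is correct and is essentially the standard argument: the survey itself states this proposition without proof, deferring to Bar-Natan's paper, where the proof runs exactly as you describe --- the crossing set of $D(T_1,\dots,T_d)$ is the disjoint union of the $X_{T_i}$, so its cube is the external product of the individual cubes pushed through the planar operation $D$, and the two flattenings are reconciled by the Koszul sign rule together with the fact that any two admissible signages yield isomorphic complexes. Your closing caveats (pinning down Bar-Natan's conventions for the tensor product of formal complexes and checking coherence with composition of arc diagrams) are precisely the "easy but tedious" parts that Bar-Natan likewise leaves largely to the reader.
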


Using the planar algebra structure all tangles can be built out of 
single crossings. What the proposition is telling us is that the
same is true of the formal complex: it is enough to specify $\leftsq -
\rightsq$ on single crossings and the rest comes from the planar algebra structure. 

The next question to ask about this approach is: how does the planar algebra structure interact with link cobordisms? What is needed is an extension of the notion of planar algebra to the situation where there are morphisms between the planar algebra constituents. The name given to the appropriate structure is a {\em canopolis}. Again read Bar-Natan's paper for details.  Working with this local approach makes far more digestible the proofs of invariance and functoriality.

\begin{rema}
    If one wishes to apply a TQFT to get something
algebraic out of Bar-Natan's geometric complex one needs  something slightly different capable of handling manifolds with boundary. The appropriate
thing is an open-closed TQFT (Lauda and Pfeiffer, \newciteold{0606331}). The question of algebraic gluing of tangle components has also been  studied (Roberts, \newcite{1304.0463}  )  where inspiration is drawn from bordered Heegaard Floer homology and the skein module of tangles in the context of Khovanov homology (Asaeda, Przytycki and Sikora,  \newciteold{0410238} ).
\end{rema}

\begin{rema}
The complex constructed above can be simplified at an early stage by  a technique called {\em de-looping} (Bar-Natan,
\newciteold{0606318}).  Though very different in approach the de-looped
  complex is closely related to the one used by Viro in his description of Khovanov homology (Viro, \newciteold{0202199} ). 
\end{rema}

\begin{rema}
 In order to place odd Khovanov homology into a Bar-Natan-like geometric framework it is necessary to enrich the theory by working with 2-categories (Putyra,  \newcite{1310.1895}; Beliakova and Wagner \newcite{0910.5050} ). 
\end{rema}

\section{Variants}
In the definition of Khovanov homology we assigned a truncated polynomial algebra to a given resolution with one variable for 
each component of the resolution. It is possible to take the quotient by other ideals and still obtain a link homology theory. 
In fact for $h,t\in R$ a functor $F_D^{h,t}\colon \bbB(X_D) \ra \modR $ may be defined by
$$
F_D^{h,t}(A) = P [x_\gamma \mid \gamma \text{ a component of the resolution associated to $A$}] / ( x_\gamma^2 = t + h x_\gamma)
$$
with maps defined in a similar way to previously:
\begin{itemize}
\item if $\alpha\neq \alpha^\prime$
% (in which case we also have $\beta=\beta^\prime$) 
we define $d_{A,B}$ to be the algebra map defined to be the identity on all generators apart from $x_\alpha$ and $ x_{\alpha^\prime}$ where 
$$
x_\alpha, x_{\alpha^\prime}\mapsto x_\beta, 
$$
\item  if $\alpha= \alpha^\prime$ 
%(in which case we also have $\beta\neq\beta^\prime$) 
we define $d_{A,B}$ to be the {\em module} map given by
$$
\underline{x}  \mapsto  (x_\beta + x_{\beta^\prime} -h) \underline{x}  
$$
where $\underline{x} $ is a monomial in which $x_\alpha$ does not  appear, and
$$
 \underline{x} x_\alpha \mapsto \underline{x}x_\beta x_{\beta^\prime}
 + t.
$$
 Thus, in particular,  $1\mapsto x_\beta + x_{\beta^\prime}-h$ and $x_\alpha \mapsto  x_\beta x_{\beta^\prime}+t$.
\end{itemize}
In each case this gives rise to a link homology theory
(Khovanov, \newciteold{0411447}; Naot \newciteold{0603347}{).} To ensure a bigraded
theory the ground ring must also be graded and contain $h$ and $t$ of
degree $-2$ and $-4$ respectively. For Khovanov homology we  take
$t=h=0$ and the bigrading of the ground ring can be concentrated in
degree zero unproblematically.

\subsection{Lee Theory}
The first variant of Khovanov homology to appear was the case $h=0$
and $t=1$ working over $\bbQ$ (Lee, \newciteold{0201105} ). 
The ring $\bbQ$ is ungraded which means that Lee's theory is a singly graded theory. 
The two most important facts about this  theory are:
\begin{itemize}
\item it can be completely calculated for all links in terms of linking numbers
\item there is a filtration on the chain complex 
%leading to a spectral sequence starting with rational Khovanov homology abutting to Lee theory
\end{itemize}

For calculation, Lee proves the following. (There is another proof this using
Bar-Natan's local theory (Bar-Natan and Morisson, \newciteold{0606542})).
\begin{theo}
  Let $K$ be a knot. Then
$$
\lee i (K) \cong 
\begin{cases}
  \bbQ \oplus \bbQ & i=0\\
0 & \text{ else.}
\end{cases}
$$
Let $L$ be a two component link. Then
$$
\lee i (K) \cong 
\begin{cases}
  \bbQ \oplus \bbQ & i=0, \text{ or } lk(\text{the two components})\\
0 & \text{ else.}
\end{cases}
$$
In general, for a $k$ component link
$
\sum \text{dim}(\lee i (L)) = 2^k
$
and there is a formula for the degrees of the generators in terms of
linking numbers.
\end{theo}

The filtration leads to a spectral sequence (implicit in Lee's paper, made
explicit in (Rasmussen, \newciteold{0402131}).
\begin{theo}
  Let $L$ be a link and $\gamma$ its number of components modulo two. There exists a spectral sequence, the {\em Lee-Rasmussen spectral
    sequence}, which has the form
$$
E_2^{p,q} = \prekh_\bbQ^{p+q, 2p+\gamma} \Longrightarrow \lee * (L).
$$
The differentials have the form $d_r\colon E_r^{p,q} \ra
E_r^{p+q,q-r-1}$. Moreover, each page of the spectral sequence is a
link invariant.
\end{theo}

\begin{rema}
  If one re-grades so that the differentials are expressed in
    terms of the gradings of Khovanov homology (rather than the
    pages of the spectral sequence) the differential $d_r$ is zero for
    $r$ odd and has
    bigrading $(1,2r)$ when $r$ is even.
\end{rema}

\begin{rema} In all known examples this spectral sequence (over $\bbQ$) collapses at the
  $E_2$-page. It is still an open question as to whether this is
  always the case or not.  
 \end{rema}

The utility of this spectral sequence is that it puts considerable
restrictions on the allowable shape of Khovanov homology. As an
example consider an attempted calculation of the rational (unreduced) Khovanov
homology of the right-handed trefoil only using the 
\begin{wrapfigure}{l}{6.5cm}
%\vspace{-5mm}
\includegraphics[scale=0.13]{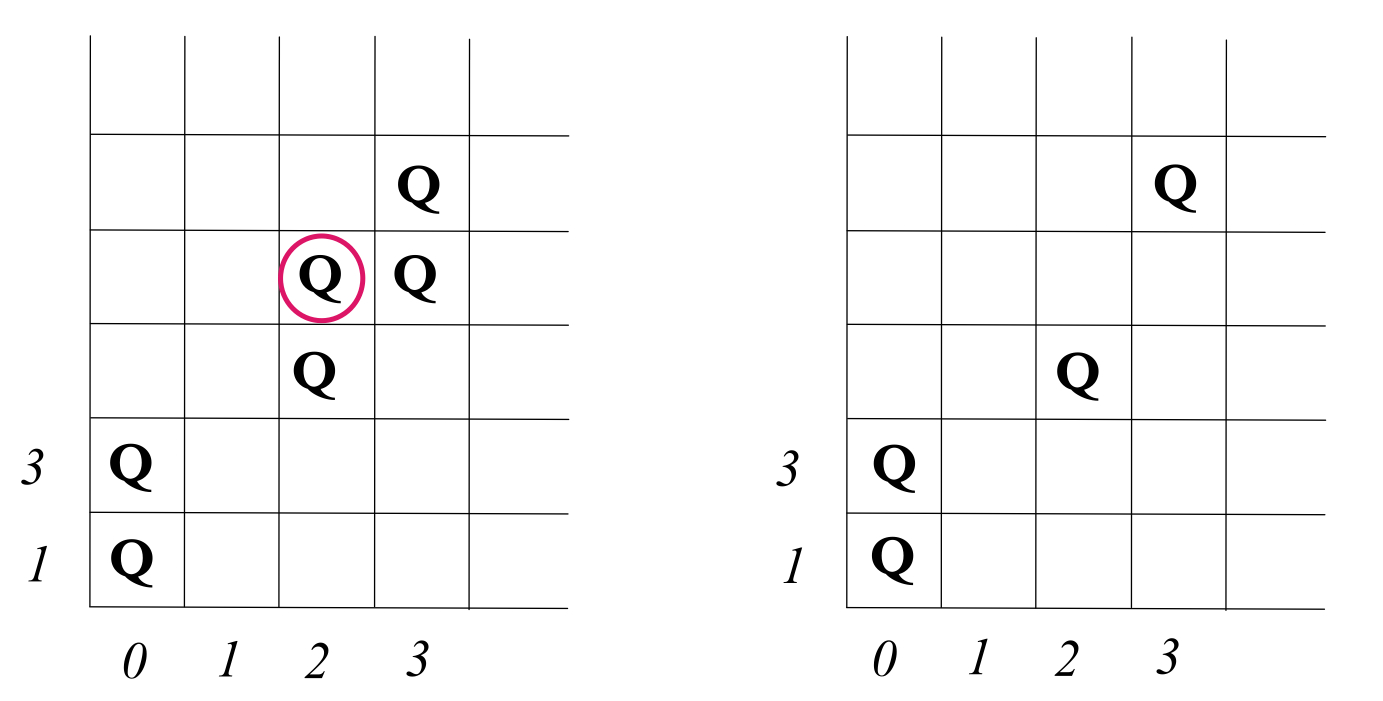}
%\vspace{-9mm}
\end{wrapfigure}  
skein long exact
sequences. At
some point you will find that you need additional information (some
boundary map may or may not be zero and you have no way of telling
without some further input).  You can conclude that the Khovanov homology must
be one of the  two possibilities shown here.
%\begin{center}
%\includegraphics[scale=0.2]{leerasex}
%\end{center} 
The existence of the Lee-Rasmussen spectral sequence tells you that
the correct answer is on the right: the two generators that survive to
the $E_\infty$-page of the spectral sequence are the two in
homological degree
zero and all the others must be killed by differentials; if the
Khovanov homology were as given on the left, then a quick look at the
degrees of the differentials shows that the generator in bi-degree (2,7)
could never be killed, giving a contradiction.

\begin{rema}
Over other rings Lee theory behaves as follows:
\begin{enumerate}
\item over $\bbF_p$ for $p$ odd it behaves as Lee theory over $\bbQ$
  (i.e. it is ``degenerate'') and the proof of Lee's theorem works verbatim.
\item over $\bbF_2$ a change of variables shows that it is isomorphic
  (as an ungraded theory) to $\bbF_2$
  Khovanov homology
\item over $\bbZ$ it has a free part of rank $2^{\text{no. of
      components}}$, no odd torsion, but a considerable amount of 2-torsion.
\end{enumerate}
\end{rema}

\begin{rema}
Over $\bbQ$ the above family (parametrized by $h$ and $t$) produces
only two isomorphism classes of theories: when $h^2 +4t=0$ the theory is
isomorphic to rational Khovanov homology and otherwise it is isomorphic to rational Lee
theory (Mackaay, Turner and Vaz, \newciteold{0509692}).   
\end{rema}

\subsection{Bar-Natan Theory}
Another interesting case is to take $t=0$ and $h=1$ (Bar-Natan,
\newciteold{0410495}).
 This theory is quite similar to Lee theory in the sense that
it is a
``degenerate'' theory requiring only linking numbers for a full
calculation and it is filtered with an attendant Lee-Rasmussen type spectral
sequence (Turner, \newciteold{0411225}). There
are, however, some differences (which possibly make it a better theory than Lee
theory): the integral version also degenerates and there is a reduced
version with a reduced Lee-Rasmussen type spectral sequence.

\section{Generalisations: $sl(N)$-homology and HOMFLYPT-homology}
There is a general procedure, due to Witten and Reshetikhin-Turaev,
for the construction of (quantum) link invariants using the
representation theory of quantum groups as input. Starting with a
simple Lie algebra $\frak g$, link components are labelled with
irreducible representations of the quantum group $U_q(\frak g)$ to
produce a link invariant. From this point of view the Jones polynomial
arises from the two dimensional representation when $\frak g = sl(2)$. 

An important and natural question is: are there
link homology theories associated to other Lie algebras which generalise
 Khovanov homology in some appropriate sense?

\subsection{ Khovanov-Rozansky $sl(N)$-homology}
An obvious place to start is $\frak g = sl(N)$; the case $N=2$ is
already done and the analogue of the Jones polynomial, the
$sl(N)$-polynomial has been extensively studied.

A very nice summary is given in
\begin{itemize}
\item {\em Khovanov-Rozansky homology of two-bridge
  knots and links } (Rasmussen, \newciteold{0508510})
\end{itemize}
and the details are contained in the original paper:  
\begin{itemize}
\item {\em Matrix factorizations and link homology} (Khovanov and
  Rozansky, \newciteold{0401268}).
\end{itemize}

\begin{theo}[Existence of $sl(N)$-homology] There exists a
  (covariant) projective functor
$$
\kr ** \colon \links \ra \vectq
$$
satisfying
\begin{enumerate}
\item If $\Sigma\colon L_1\ra L_2$ is an isotopy then $KR_N(\Sigma)$ is an isomorphism.
\item $\kr **(L_1\sqcup L_2) \cong \kr **(L_1) \otimes \kr ** (L_2)$.
\item $\kr i j(\text{unknot}) =
  \begin{cases}
  \bbQ & i= 0 \text{ and } j=2k-N-1 (k=1, \ldots , N)\\
0 & \text{else}  
  \end{cases}
$
\item There are long exact sequences:
$$
\hspace{-0.5cm}
\xymatrix@=12pt{
\ar[r]^(.2){\delta} & \kr {i-1}{j+N}  \sres \ar[r] & \kr i{j}  \negx \ar[r] & \kr {i}{j+N-1}  \ores \ar[r]^(0.8)\delta &
}
$$
$$
\hspace{-0.5cm}
\xymatrix@=12pt{
\ar[r]^(.2){\delta} & \kr {i}{j-N+1}  \ores \ar[r] & \kr i{j}  \posx \ar[r] & \kr {i+1}{j-N}  \sres \ar[r]^(0.8)\delta &
}
$$
\end{enumerate}
\end{theo}

Immediately we see there is something fishy with this: the long
exact sequences feature
\begin{wrapfigure}{l}{3cm}
%\vspace{-5mm}
\includegraphics[scale=0.2]{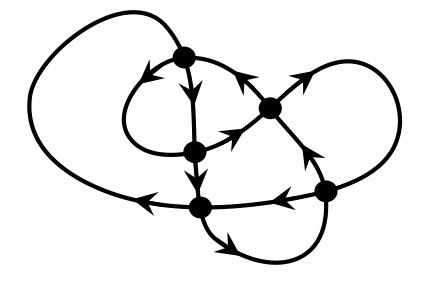}
%\vspace{-11mm}
\end{wrapfigure}  
  an as yet undefined object. In fact $\kr **$
assigns a bigraded vector space to each {\em singular link diagram}
(where crossing of the form $\posx$, $\negx$ and $\sres$ are
allowed). Up to isomorphism this assignment is invariant on deforming the diagram by
Reidemeister moves away from singularities. The situation is not quite
as good as for Khovanov homology because even with perfect information
about the long exact sequences, the basic normalising set of object
consists not of one single simple object (the unknot) but an infinity of
4-valent planar graphs such as the one shown here.

\begin{rema}
  If $\Sigma$ is a cobordism then $KR_N(\Sigma)$ has bi-degree $(0, (1-N)\chi(\Sigma))$.
\end{rema}

\begin{exer}
  Attempt a computation of $\kr ** (\text{Hopf link})$ from the
  existence theorem, carefully observing why this is much harder than
  when attempting the same computation for Khovanov homology.
\end{exer}

\begin{prop}
   Let $P_N (D) = \Sigma_{i,j} (-1)^{i}q^j \text{dim}(\kr i j (L))$ . We have
$$
q^{-N} P_N(\posx) - q^NP_N(\negx) + (q-q^{-1})P_N(\ores) =0
$$
and
$$
P_N(\text{unknot}) = \frac{q^N - q^{-N}}{q-q^{-1}}= \sum_{k=1}^N q^{2k-N-1}
$$
\end{prop}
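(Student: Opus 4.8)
The plan is to extract the skein relation and the unknot value directly from the existence theorem for $sl(N)$-homology, exactly mimicking the argument already given for the Jones polynomial case (Proposition on the Jones polynomial above). First I would recall the elementary fact that in any long exact sequence of finite-dimensional vector spaces the alternating sum of dimensions vanishes. Applying this to the long exact sequence for a negative crossing, for each fixed $j$ the alternating sum over $i$ of $\dim \kr{i-1}{j+N}{\sres} - \dim \kr i j {\negx} + \dim \kr i {j+N-1}{\ores}$ is zero. Translating this into the generating function $P_N$ — being careful to read off the correct powers of $q$ from the internal gradings in the exact sequence, namely a shift of $j+N$ in the $\sres$ term (contributing $q^{-N}$ after the substitution $j\mapsto j-N$) and of $j+N-1$ in the $\ores$ term — yields an identity of the shape $q^{-N}P_N(\sres) - P_N(\negx) + q^{-N+1}P_N(\ores)=0$, up to getting the exponents exactly right. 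Doing the same with the long exact sequence for the positive crossing gives a second linear relation among $P_N(\posx)$, $P_N(\ores)$ and $P_N(\sres)$. Eliminating the unwanted term $P_N(\sres)$ between the two relations produces the stated skein relation $q^{-N}P_N(\posx) - q^N P_N(\negx) + (q-q^{-1})P_N(\ores)=0$.

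For the unknot value I would simply read off part (3) of the existence theorem: $\kr i j(\text{unknot})$ is $\bbQ$ when $i=0$ and $j=2k-N-1$ for $k=1,\dots,N$, and zero otherwise, so
$$
P_N(\text{unknot}) = \sum_{k=1}^N (-1)^0 q^{2k-N-1} = q^{1-N}+q^{3-N}+\cdots+q^{N-1},
$$
which is the finite geometric series $q^{1-N}\sum_{k=0}^{N-1} q^{2k} = q^{1-N}\cdot\frac{q^{2N}-1}{q^2-1} = \frac{q^N-q^{-N}}{q-q^{-1}}$. This part is a one-line computation.

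The main obstacle — such as it is — is bookkeeping: getting the homological and quantum degree shifts in the two long exact sequences to line up so that the powers of $q$ come out exactly as in the claimed skein relation, and in particular checking that the $(-1)^i$ signs in $P_N$ interact correctly with the index shift $i\mapsto i-1$ (resp. $i\mapsto i+1$) appearing in the $\sres$ terms of the two sequences. This is the same subtlety that occurs in the Jones polynomial proposition above, where one must track $\omega$ and $c$; here it is cleaner because the exponent shifts are explicit constants ($N$, $N-1$) rather than orientation-dependent quantities, but one still has to confirm that when $\sres$ is eliminated the $P_N(\sres)$ coefficients genuinely cancel. I would verify this by writing the two relations as $a\,P_N(\sres) = \alpha P_N(\negx)+\beta P_N(\ores)$ and $a'\,P_N(\sres)=\alpha' P_N(\posx)+\beta' P_N(\ores)$ with $a,a'$ monomials in $q$, then forming $a' \cdot(\text{first}) - a\cdot(\text{second})$; the claim is that after multiplying through by an overall power of $q$ one lands on the advertised relation, and this is a routine check once the degrees are pinned down.
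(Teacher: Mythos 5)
Your approach is exactly the one the paper intends: the proposition is not proved in the text but is immediately followed by an exercise instructing the reader to prove it "using the long exact sequences and the fact that the alternating sum of dimensions in a long exact sequence is always zero," which is precisely your plan, and the bookkeeping you sketch does land on the stated skein relation. One small caution: the intermediate identity you write for the negative crossing, $q^{-N}P_N(\sres) - P_N(\negx) + q^{1-N}P_N(\ores)=0$, has a sign off on the $P_N(\sres)$ term (the correct relation is $-q^{-N}P_N(\sres) - P_N(\negx) + q^{1-N}P_N(\ores)=0$, equivalently $P_N(\sres)=qP_N(\ores)-q^N P_N(\negx)$); you explicitly flag this as "up to getting the exponents exactly right," and the elimination procedure you describe does produce the advertised relation once the signs are pinned down, so this is a typo rather than a gap.
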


We recognise these  two properties as the ones
characterising the $sl(N)$-polynomial showing that $P_N$ {\em is} the
$sl(N)$-polynomial.

\begin{exer}
  Prove this proposition using the long exact sequences and the fact
  that the alternating sum of dimensions in a long exact sequence is
  always zero.
\end{exer}

The construction of $sl(N)$-homology (and the proof of the existence
theorem above) proceeds once again by defining a local coefficient
system on a the Boolean lattice of crossings of a link diagram (a
decorated ``cube'' if you prefer that language). 
As before this begins by constructing a {\em resolution} for each
subset $A$ of the set of crossings $X_D$. Each resolution
is a planar singular graph and the rules for its construction
are:

$$
\xymatrix@=60pt{
\raisebox{-5.3mm}{\includegraphics[scale=0.16]{ores}}  & \raisebox{-5.3mm}{\includegraphics[scale=0.16]{negx}} \ar[l]_{c\notin A} \ar[r]^{c\in A} & \raisebox{-5.3mm}{\includegraphics[scale=0.16]{sres}} 
}
$$

$$
\xymatrix@=60pt{
\raisebox{-5.3mm}{\includegraphics[scale=0.16]{sres}}  & \raisebox{-5.3mm}{\includegraphics[scale=0.16]{posx}} \ar[l]_{c\notin A} \ar[r]^{c\in A} & \raisebox{-5.3mm}{\includegraphics[scale=0.16]{ores}} 
}
$$

If $B$ covers $A$ (it contains exactly one more crossing) then the corresponding resolutions are identical except in a small neighbourhood of the additional crossing where one of the following two local changes is seen: $\xymatrix{\ores \ar[r] &
  \sres}$ or  $\xymatrix{\sres \ar[r] & \ores}$. What is now needed is
a way of associating a module to each resolution and maps corresponding to cover relations (to cube
edges) giving a functor $F_{D,N}\colon \bbB (X_D) \ra \grvectQ$ from which a
complex and its homology can be extracted as before. For this to be
worth anything it must result in a link invariant and therein, of
course, lies the difficulty. 

Khovanov and Rozansky employ {\em matrix factorizations} in order to
carry this out. There are some guiding principles coming from the
description of the $sl(N)$-polynomial given by H. Murakami, Ohtsuki
and Yamada who describe it in terms of certain graphs which suitably
interpreted are the ones considered here. One may think of their
construction as associating a Laurent polynomial $\moy (\gamma)$ to each
planar singular graph $\gamma$  and the $sl(N)$-polynomial is then expressed as
a sum (over resolutions) of such polynomials.  Matrix factorizations
can be used to make an assignment
$$
A_N^*(-)\colon \text{Planar singular graphs} \ra \grvectQ
$$
such that $\sum_i q^i\text{dim}A_N^i(\Gamma)= \moy(\Gamma)$. The local
relations satisfied by $\moy (-)$ are lifted to $A_N^*(-)$; for
example
$$
\moy
(\hspace{0.1mm}\raisebox{-7.3mm}{\includegraphics[scale=0.17]{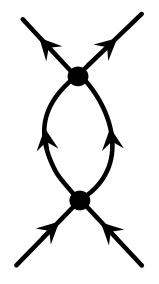}}\hspace{0.1mm}
) = (q+q^{-1})\;\;\moy (\hspace{1mm}\raisebox{-2.5mm}{\includegraphics[scale=0.1]{sres}}\hspace{1mm})
$$
becomes
$$
A_N^i
(\hspace{0.1mm}\raisebox{-7.3mm}{\includegraphics[scale=0.17]{MOYrel}}\hspace{0.1mm}
) \cong A_N^{i-1}
(\hspace{1mm}\raisebox{-2.5mm}{\includegraphics[scale=0.1]{sres}}\hspace{1mm})
\oplus
A_N^{i+1}  (\hspace{1mm}\raisebox{-2.5mm}{\includegraphics[scale=0.1]{sres}}\hspace{1mm})
$$
Moreover, if two planar singular graphs are identical except in a small neighbourhood where they are as shown above then there are maps
$$\xymatrix{A_N^*(\ores) \ar[r] &
  A_N^*(\sres)} \;\;\;\;\;\;\; \xymatrix{A_N^*(\sres) \ar[r] &
  A_N^*(\ores)}
$$ 
A functor is constructed by taking
$$
F_{D,N}(A) = A_N^*( \text{resolution associated to $A$}) 
$$
and applying the maps above. Making the complex and taking homology
defines $sl(N)$-homology. 

\begin{rema} (on gradings for which we follow the conventions given
  in Rasmussen's paper cited above). Let $A\subset X$ and let $\Gamma$ be the
  associated resolution. If $x \in A_N^k(\Gamma)$ then the
  corresponding element of the (bi-graded) complex has bi-degree
$$
(|A| -n_+, k-i+(N-1)(n_+ - n_-))
$$
  \end{rema}

\begin{rema}
   $KR_2^{**}$ should be isomorphic to $\prekh^{**}$ and indeed it
  is (Hughes, \newcite{1302.0331}).   
 \end{rema}

\begin{rema}
  There are both reduced and un-reduced versions of the theory and
  they are related by a
spectral sequence (Lewark, \newcite{1310.3100}). 
\end{rema}

Calculations with $sl(N)$-homology are much harder than for Khovanov
homology. An understanding of why this is so can be obtained by
attempting to compute the $sl(N)$-homology of Hopf link and comparing it
to the Khovanov homology calculation. Here are some calculational results.

\begin{itemize}
\item  The $sl(N)$-homology of two
  bridge knots has been completely determined and the result can be expressed in terms of the HOMFLYPT polynomial and signature
  (Rasmussen, \newciteold{0508510}).  The torus knots $T(2,n)$ are a
  special case and this computation confirms previous conjectures
  (Dunfield-Gukov-Rasmussen, \newciteold{0505662}).
\item There is an explicit conjecture about
 the  $sl(N)$-homology of 3-stranded torus knots (Gorsky and Lewark,
 \newcite{1404.0623}).
\item As $n\ra \infty$ the homology $\kr ** (T(k,n))$ stabilises in
  bounded degree (Sto\u{s}i\'c, \newciteold{0511532}) so it makes sense to consider the $sl(N)$-homology
 of $T(k,\infty)$ and there are conjectures to what this should be 
 (Gorsky, Oblomov and Rasmussen,  \newcite{1206.2226}). 
 \end{itemize}

 \begin{rema}
   One might hope that taken collectively the family of
  $sl(N)$-homologies are a complete invariant but this is not so and
  there are families of distinct knots undistinguishable by the $\kr **$ (Watson, \newciteold{0606630}; Lobb, \newcite{1105.3985}).
 \end{rema}

\begin{rema}
 There was a different construction for $N=3$ available before matrix
 factorizations entered the picture (Khovanov, \newciteold{0304375}) using Kuperberg's theory of webs. 
\end{rema}

 The construction of the local coefficient system used in the construction is once again a two step process: a geometric step is followed by an algebraic step and it is natural to ask if 
\begin{wrapfigure}{l}{3.5cm}
\vspace{-3mm}
\includegraphics[scale=0.15]{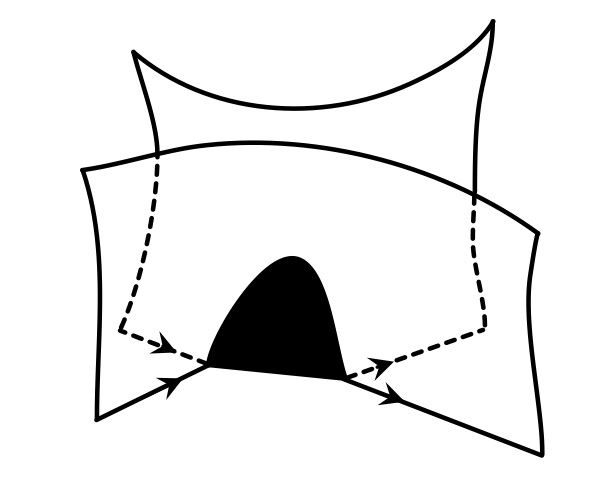}
\vspace{-7mm}
\end{wrapfigure}  
 Bar-Natan's approach can be carried over  to $sl(N)$-homology. For simplicity we have 
been
  using the language of singular link diagrams in  which we allowed crossings looking  like $\sres$.  In fact the notation used by Khovanov
  and Rozansky is to elongate the 
vertex into a {\em thick } edge (or
  {\em double  edge) }
\hspace{1mm}\raisebox{-1.9mm}{\includegraphics[scale=0.07]{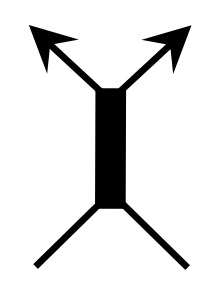}}\hspace{1mm}.
This depiction more accurately reflects the viewpoint of Murakami, Ohtsuki and Yamada and with this in  mind Bar-Natan's approach can  be
made to work for $sl(N)$-homology, but 
cobordisms must be generalised to {\em foams} 
which take
into account the existence of thick edges (Mackaay and Vaz,
\newcite{0710.0771}; Mackaay, Sto\u{s}i\'c and Vaz, \newcite{0708.2228}).

\begin{rema}
   The main new algebraic ingredient in the construction of
  $sl(N)$-homology is the notion of a matrix factorization and these
  are used locally: a matrix factorization is associated to a small
  neighbourhood of a resolved diagram. Locality of diagrams was best
  patched together (when incorporating link cobordisms too) using the
  formalism of a canopolis. There is a certain
  category of matrix factorizations that can be given the structure of a
  canopolis and the construction of $sl(N)$-homology can be presented in these terms (Webster, \newciteold{0610650}).
\end{rema}

\begin{rema}
   The construction of Khovanov homology can be modified to
  produced Lee theory and in a similar way there are ``degenerate'' variants of $sl(N)$
  homology (Gornik, \newciteold{0402266}). Like Lee theory these are filtered
  theories and can be completely computed in terms of linking
  numbers. There is also an analogue of the Lee-Rasmussen spectral
  sequence for these theories (Wu, \newciteold{0612406}). 
\end{rema}

\begin{rema}
  Throughout this section we have been assuming that we are
  working over $\bbQ$ (or $\bbC$), but integral theories have also been studied (Krasner, \newcite{0910.1790}).
\end{rema}

\subsection{Khovanov-Rozansky HOMFLYPT-homology}
For each $N$ the (normalised) $sl(N)$-polynomial  $\widetilde{P}_N $ is
a specialisation of the following version of the HOMFLYPT polynomial $\widetilde{P}$
$$
a \widetilde{P}(\posx) - a \widetilde{P} (\negx) + (q-q^{-1}) \widetilde{P} (\ores) =0
$$
and
$$
\widetilde{P} (\text{unknot}) = 1.
$$
Note that when $a=1$ this also gives the (Conway)-Alexander
polynomial. A very natural question in the context of link homology
theories is: is there a link homology theory $\barh$ whose graded
dimensions combine to give the polynomial $\widetilde{P}$ and which
specialize to $\krr **$? (In asking this question it is not
immediately clear what ``specialise'' should mean.) There is such a theory constructed once again by Khovanov and
Rozansky. 
\begin{itemize}
\item {\em Matrix factorizations and link homology II}, (Khovanov and
  Rozansky, \newciteold{0505056})
\end{itemize}
A good place to start might be the expository sections of 
\begin{itemize}
\item {\em Some differentials on Khovanov-Rozansky homology}
  (Rasmussen,  \newciteold{0607544})
\end{itemize}

Khovanov and Rozansky's {\em HOMFLYPT homology} (reduced version) assigns to each braid closure
diagram $D$ a {\em triply} graded $\bbQ$-vector space
$\barh^{*,*,*}(D)$ such that 
\begin{enumerate}
\item $D_1\sim D_2 \Longrightarrow \barh (D_1) \cong \barh (D_2)$, 
\item The HOMFLYPT polynomial is recovered
$$
\sum_{r,s,t} (-1)^{\frac{t-s}{2}}a^s q^r \text{dim}(\barh^{r,s,t}(D))
= \widetilde{P}(D),
$$
\item $\barh (\text{unknot})\cong \bbQ$ in grading $(0,0,0)$,
\item $\barh (L_1 \sqcup L_2) \cong \barh (L_1) \otimes \barh
  (L_2)\otimes \bbQ [x] $,
\item There are skein long exact sequences.
\end{enumerate}

\begin{rema}
 The notation and grading conventions we are following are
    Rasmussen's. In fact Khovanov and Rozansky work with the unreduced
    theory $H$ which is related to reduced by $H\cong \barh \otimes
    \bbQ [x]$. The reduced theory has the nice property that for any
    connected sum we have $\barh (L_1 \sharp L_2) \cong \barh (L_1) \otimes \barh (L_2)$. 
\end{rema}

\begin{rema}
   The construction of HOMFLYPT homology uses matrix
  factorizations (though see section \ref{subsec:krhoch} below). They
  are {\em graded} matrix factorizations which is the grading which
  pushes through to ultimately give three gradings. 
\end{rema}

\begin{rema}
 The theory is not as well behaved as previous theories. For one
  thing it is restricted to braid closures (a priori this could be
  removed but is required for the proof of Reidemeister
  invariance). Another drawback is that there is no functoriality.
 \end{rema}

HOMFLYPT-homology reproduces the polynomial $\widetilde{P}$ but what
about its relationship to $sl(N)$-homology? Since $\widetilde{P}$
specialises to the $sl(N)$-polynomial one would hope there is a
relationship. This question has been thoroughly investigated in a
wonderful paper which analyses a family of
spectral sequences relating these theories (Rasmussen,
\newciteold{0607544}). The main result is:

\begin{theo}
  \begin{enumerate}
  \item For each $N>0$ there exists a spectral sequence starting with
    $\barh$ and converging to $\krr **$. Moreover each page is a knot
    invariant.
\item There is a spectral sequence starting with $\barh$ and
  converging to $\bbQ$.
  \end{enumerate}
\end{theo}

As a consequence one has:
\begin{prop}
For large $N$
$$
\krr ij (L) \cong \bigoplus_{\substack{j= r+Ns\\ 2i=t-s}} \barh^{r,s,t}(L).
$$  
\end{prop}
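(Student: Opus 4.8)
The plan is to derive this from part~(1) of the preceding theorem by showing that, once $N$ is large relative to $L$, the spectral sequence from $\barh$ to $\krr{*}{*}$ collapses at its first page. Fix a link $L$ for which reduced HOMFLYPT homology is finite dimensional (e.g.\ a knot), and let $W=W(L)$ be the width of its $q$-grading: the difference between the largest and smallest $r$ with $\barh^{r,s,t}(L)\neq 0$ for some $s,t$. Recall from the grading conventions above that a class in $\barh^{r,s,t}(L)$ contributes to $sl(N)$-homology in bidegree $(i,j)$ with $j=r+Ns$ and $2i=t-s$; the assertion is then that for $N\gg 0$ the spectral sequence does nothing, so one simply reads off $\krr{i}{j}(L)$ from $\barh(L)$ through this grading-collapse dictionary.

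The key step is to locate the differentials. Each differential $d_k$ ($k\geq 1$) of the spectral sequence preserves the internal $sl(N)$-grading $j=r+Ns$ (that being the grading of the abutment, which every $d_k$ respects), raises the $sl(N)$-homological grading $i=(t-s)/2$ by one, and --- the point I would extract from Rasmussen's construction --- shifts the $a$-grading $s$ nontrivially (being a differential on the spectral sequence of a filtration indexed by $s$, it changes $s$ by a nonzero amount, $\pm k$ in the usual indexing). Since $d_k$ preserves $r+Ns$ while moving $s$, it must move the $q$-grading $r$ by a nonzero multiple of $N$; in particular $|\Delta r|\geq N$ on every page. Hence as soon as $N>W$ the source and target of any $d_k$ cannot both be nonzero, so all $d_k$ vanish and $E_\infty\cong E_1=\barh(L)$. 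The abutment statement then identifies $\krr{i}{j}(L)$, up to a non-canonical splitting of the induced filtration, with the part of $\barh(L)$ in $sl(N)$-bidegree $(i,j)$, i.e.\ with $\bigoplus_{j=r+Ns,\ 2i=t-s}\barh^{r,s,t}(L)$, which is the claim.

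For $N>W$ this direct sum has at most one nonzero term: if $(r,s,t)$ and $(r',s',t')$ in the support of $\barh(L)$ map to the same $(i,j)$, then $r-r'=N(s'-s)$ with $|r-r'|\leq W<N$ forces $s=s'$, hence $r=r'$ and $t=t'$; so for large $N$ the collapse map is injective on the support of $\barh(L)$, and the Proposition amounts to saying that each $\krr{i}{j}(L)$ is the unique $\barh^{r,s,t}(L)$ mapping onto it (or $0$). The hard part here is not this deduction but the input: constructing the spectral sequence, identifying its first page with $\barh(L)$, and pinning down where its differentials live --- this is part~(1) of the theorem, which I would take as given. Within the deduction the only thing to watch is grading bookkeeping: one must use the correct dictionary (the factor of $N$ multiplies $s$, not $r$, in $j=r+Ns$) and confirm that $d_k$ preserves $r+Ns$ while disturbing $s$, so that the hypothesis ``$N$ large'' genuinely forces $d_k=0$.
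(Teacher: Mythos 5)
Your argument is correct and is precisely the intended derivation: the paper states this proposition as an immediate consequence of the existence of Rasmussen's spectral sequences, and the implicit reasoning is exactly your collapse-for-large-$N$ argument (differentials preserve $j=r+Ns$ while shifting $s$, hence shift $r$ by at least $N$, which exceeds the finite $q$-width of $\barh(L)$ once $N$ is large). Your added observation that the direct sum then has at most one nonzero summand, and that there are no extension problems over $\bbQ$, completes the bookkeeping correctly.
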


\begin{rema}
  The existence of these spectral sequences gives information
    about the form of HOMFLYPT-homology in much the same way as the
    Lee-Rasmussen spectral sequence gives information about the form
    of Khovanov homology. As there is now a whole family of
spectral sequences to be considered this gives a large amount of
    information. 
Rasmussen carries out low crossing
    number computations, and a nice worked example of how to glean information
    from the existence of the spectral sequences is the calculation
    showing that $\barh$(Conway knot) and
    $\barh$(Kinoshita-Terasaka knot) are
    isomorphic (Mackaay and Vaz, \newcite{0812.1957}). Some of this structure  was predicted before the
    construction of the spectral sequences in particular
   for torus knots (Dunfield, Gukov, and Rasmussen, \newciteold{0505662}). 
\end{rema}

\begin{rema}
 One can construct other theories similar to
  $\krr **$ and there are Rasmussen-type spectral sequences starting
  with $\barh$ and converging to these theories (Wu,  \newciteold{0612406}).
\end{rema}

\begin{rema}
There is a spectral sequence converging to knot Floer homology
 which has $E_1$-page the HOMFLYPT-homology (Manolescu, \newcite{1108.0032} ).
  \end{rema}

\section{Generalisations: further developments}

\subsection{Other constructions of Khovanov-Rozansky HOMFLYPT-homology}\label{subsec:krhoch}

There is an alternative construction of triply-graded HOMFLYPT-homology which uses Hochschild homology (Khovanov, \newciteold{0510265}). 
One can associate a cochain complex $F^*(\sigma)$ to a word $\sigma$ representing a braid group element (Rouquier, \newciteold{0409593}). This assignment is  such that if two words represent the same group element then the associated complexes are isomorphic. Khovanov uses this construction in the following way. Suppose we have a link presented as the closure of an $m$-braid diagram $D$ and let $\sigma$ be the corresponding braid word and $F^*(\sigma)$ its Rouquier complex. Now apply Hochschild homology $H\!H(R, -)$ to this to get a complex
$$
\xymatrix{
\cdots \ar[r] & H\!H(R,F^i(\sigma)) \ar[r]& H\!H(R,F^{i+1}(\sigma) ) \ar[r]& H\!H(R,F^{i+2}(\sigma)) \ar[r] & \cdots
}
$$
where $R$ is a certain ring ($R=\bbQ[x_1-x_2, \ldots , x_{m-1}-x_m]\subset \bbQ[x_1, \ldots , x_m]$). There are internal gradings and each term in the sequence is in fact bigraded.

\begin{theo}
  The homology of this complex, denoted ${\mathcal H}^{*,*,*}$, is independent (up to isomorphism) of the choices made and (module juggling grading conventions) is isomorphic to Khovanov and Rozansky's HOMFLYPT-homology. 
\end{theo}

There is another more geometric construction of HOMFLYPT homology which uses the cohomology of sheaves on certain algebraic groups  (Webster and Williamson, \newcite{0905.0486}).

\subsection{Coloured link homologies}
Coloured Jones polynomials arise from $sl(2)$ using higher
dimensional representations rather than the fundamental two
dimensional one. The natural question here is to ask if there are link homology theories that stand in the same relationship
to these polynomials as Khovanov homology does to the Jones polynomial. There is a formula for each coloured Jones polynomial as a sum of Jones polynomials of cables and this can be also be used to define link homology theories.

\begin{itemize}
\item {\em Categorifications of the colored Jones polynomial} (Khovanov, \newciteold{0302060})
\end{itemize}
For this to work coefficients in $\bbZ/2$ are needed, but can be extended to $\bbZ[\frac{1}{2}]$.
\begin{itemize}
\item {\em Categorification of the colored Jones polynomial and Rasmussen invariant of links} (Beliakova-Wehrli, \newciteold{0510382})
\end{itemize}
The $sl(N)$-polynomial is associated to the fundamental representation of $sl(N)$ and by allowing other representations one obtains coloured versions. Khovanov and Rozansky's $sl(N)$- homology can be extended in a similar way.
\begin{itemize}
\item {\em Generic deformations of the colored {${sl}(N)$}-homology} (Wu, \newcite{0907.0695})
\end{itemize}
Using the definition of HOMFLYPT homology in terms of Hochschild homology it is possible to consider coloured versions of this theory too.
\begin{itemize}
\item {\em The 1,2-colored HOMFLY-PT link homology} (Mackaay, Sto\u{s}i\'c and Vaz, \newcite{0809.0193})
\end{itemize}
The algebro-geometric construction of HOMFLYPT homology can be extended to a coloured version which agrees with the above when restricted.
\begin{itemize}
\item {\em A geometric construction of colored HOMFLYPT homology} (Webster and Williamson, \newcite{0905.0486})
\end{itemize}

\subsection{Higher representation theory}
This is now a vast and important subject providing the most
comprehensive answers to the question of how one should generalise
Khovanov homology to other Lie algebras. To get an idea of the state of play you could read the introductory sections of:

\begin{itemize}
\item {\em Khovanov homology is a skew Howe 2-representation of categorified quantum $sl_m$} (Lauda, Queffelec and Rose, \newcite{1212.6076})
\end{itemize}
\begin{itemize}
\item {\em An introduction to diagrammatic algebra and categorified quantum $sl_2$} (Lauda, \newcite{1106.2128})
\end{itemize}
\begin{itemize}
\item {\em Knot invariants and higher representation theory } (Webster, \newcite{1309.3796})
\end{itemize}
For the latter, the theory is explained separately in detail for the $sl(2)$ case (Webster, \newcite{1312.7357}).

The work of Rouquier has been of fundamental importance in this area and you can get an idea of his vision from:
\begin{itemize}
\item {\em Quiver {H}ecke algebras and 2-{L}ie algebras} (Rouquier, \newcite{0812.5023})
\end{itemize}

\section{Applications of Khovanov homology}
\subsection{Concordance invariants}

The first paper to read on this subject is 
\begin{itemize}
\item {\em Khovanov homology and the slice genus} (Rasmussen, \newciteold{0402131})
\end{itemize}
Recall that for a knot the Lee-Rasmussen spectral sequence leaves only two generators on the $E_\infty$-page. If we use the grading conventions which impose the differentials on the usual picture for Khovanov homology, then denoting the $E_\infty$ page by $\bbK_\infty^{*,*}$, the statement that the spectral sequence {\em converges} to Lee theory means that 
$$
\bbK_\infty^{i,j} = \frac{F^j \lee i}{ F^{j+1} \lee i}
$$
where $F^* \lee i$ is the induced filtration on Lee theory. Since we
are working over $\bbQ$ (so the spectral sequence has no extension problems) this means that  $\lee i \cong \bigoplus_j \bbK_\infty^{i,j}$. 

A priori the filtration grading (here the grading denoted by $j$) of
the $E_\infty$-page of a spectral sequence is not particularly
meaningful, but in this case the entire spectral sequence from the
second page onwards is a knot invariant and thus the filtration
gradings of the generators (two of them) surviving to the
$E_\infty$-page are too. In fact these two generators lie in
filtration gradings that differ by two.  

\begin{prop}
 For a knot $K$ there exists an even integer $s(K)$ such that the two
 surviving generators in the Lee-Rasmussen spectral sequence have
 filtration degrees $s(K) \pm 1$.
\end{prop}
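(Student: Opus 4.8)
The plan is to pin down the two surviving generators of the Lee--Rasmussen spectral sequence for a knot by combining the known structure of Lee theory with the invariance of the pages. First I would recall from the calculation theorem for Lee theory that for a knot $K$ one has $\lee i (K) \cong \bbQ \oplus \bbQ$ concentrated in homological degree $i=0$, and everything else vanishing. Since the spectral sequence converges to $\lee{*}$ and we are over $\bbQ$ (hence $\bbK_\infty^{i,j} = F^j\lee i / F^{j+1}\lee i$ with no extension problems), the $E_\infty$-page has exactly two nonzero entries, both with $i=0$, say in filtration gradings $j = s_1$ and $j = s_2$ with $s_1 \leq s_2$. The content to establish is then that $s_1$ and $s_2$ are odd integers with $s_2 - s_1 = 2$; setting $s(K) = s_1 + 1 = s_2 - 1$ gives the statement.

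The key step is the parity and gap statement. For the parity, I would use that the $E_2$-page is (a regrading of) rational Khovanov homology, together with Proposition~\ref{prop:alternating}'s underlying parity phenomenon — more directly, the fact recorded early in the excerpt that for a knot (one component, odd) the group $\kh *{\text{even}}{K}$ is trivial, so all Khovanov homology of a knot sits in bidegrees $(i,j)$ with $i+j$ odd. Tracking this parity through the regrading $E_2^{p,q} = \kh{p+q}{2p+\gamma}{K}$ with $\gamma \equiv 1$ shows that every entry of the spectral sequence, on every page, lives in filtration gradings of a fixed parity, namely odd; in particular the two surviving generators have odd filtration degree. For the gap, I would invoke the structure of Lee's canonical generators: the two surviving classes are represented by the cycles $\mathfrak{s}_{\mathfrak{o}}$ associated to the two orientations of $K$, and Lee's (or Rasmussen's) explicit computation of the leading terms of $\mathfrak{s}_{\mathfrak{o}} \pm \mathfrak{s}_{\bar{\mathfrak{o}}}$ shows their filtration levels differ by exactly $2$. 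Equivalently, one shows $s_{\min}$ and $s_{\max}$ cannot coincide (the two classes are not in the same filtration level) and, being of the same parity, the minimal possible separation of $2$ is in fact achieved — this is where one genuinely needs the $\mathbb{F}_2$-Bockstein / mirror duality input, or Rasmussen's direct analysis of the quantum filtration on $Lee$.

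The main obstacle is the gap claim $s_2 - s_1 = 2$ rather than merely $s_2 > s_1$: parity only forces the difference to be a positive even integer. The cleanest route is the one Rasmussen takes — work with the distinguished generators coming from orientations and show that $\mathfrak{s}_{\mathfrak{o}} + \mathfrak{s}_{\bar{\mathfrak{o}}}$ and $\mathfrak{s}_{\mathfrak{o}} - \mathfrak{s}_{\bar{\mathfrak{o}}}$ have quantum filtration degrees that provably differ by $2$, using that one of these is obtained from the other by the action of the endomorphism induced by a dot (a genus-one cobordism) on the unknot summand, which shifts filtration by exactly $-2$ and is an isomorphism rationally. I would therefore devote the bulk of the argument to making this filtered-module computation precise; the invariance of the pages (already asserted in the excerpt) then immediately upgrades the resulting integer $s(K)$ to a knot invariant, and its evenness follows since $s(K) = s_1 + 1$ with $s_1$ odd.
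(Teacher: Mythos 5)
The paper does not actually give a proof of this proposition: being a survey it simply asserts it (preceded by the one-line remark that ``these two generators lie in filtration gradings that differ by two'') and defers to Rasmussen's paper \emph{Khovanov homology and the slice genus} for the details. Against that benchmark, your sketch is correct and reproduces the two ingredients of Rasmussen's argument in the right order. The parity step is essentially complete: for a knot the quantum grading $j$ of $Kh_{\bbQ}$ is always odd (the proposition in Section~1.1 about $\kh *{\text{even}}{L}$ vanishing when the number of components is odd), and since after the regrading the filtration degree of the spectral sequence \emph{is} this $j$-grading, the surviving generators both sit in odd filtration degree. For the gap, you correctly identify the canonical generators $\mathfrak{s}_{\mathfrak{o}}$, $\mathfrak{s}_{\bar{\mathfrak{o}}}$ and the filtered basepoint action as the mechanism. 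One tightening worth making explicit: the $X$-action on $Lee$ is a filtered map of degree $-2$ which is an \emph{isomorphism} over $\bbQ$ (since $X^2=1$ in $Lee(U)$) and it interchanges $\mathfrak{s}_{\mathfrak{o}} + \mathfrak{s}_{\bar{\mathfrak{o}}}$ and $\mathfrak{s}_{\mathfrak{o}} - \mathfrak{s}_{\bar{\mathfrak{o}}}$ up to scalar; applying it in \emph{both} directions immediately gives the two-sided bound $|s_{\max}-s_{\min}|\le 2$, so that once you know the two classes lie in distinct filtration levels, parity forces the difference to be exactly $2$ with no need to appeal separately to mirror duality or Bockstein considerations. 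That distinctness is the one point you gloss over (you write ``cannot coincide'' without saying why); it follows from the same computation, since if both classes sat in the same top filtration level the degree $-2$ isomorphism would force a nonzero class two levels lower, contradicting that the associated graded has total dimension two. With that filled in, your outline matches the cited source.
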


\begin{defi}
  The integer $s(K)$ is called the {\em Rasmussen  $s$-invariant} of the knot $K$.
\end{defi}     

\begin{rema}
For an alternating knot, Rasmussen's invariant agrees with the signature.  
\end{rema}

By digging down a bit into the filtration, Rasmussen shows that his invariant has the following properties:
\begin{enumerate}
\item $s(\text{unknot}) = 0$,
\item $s(K_1 \sharp K_2) = s(K_1) + s(K_2)$,
\item $s(K^!) = - s(K)$.
\end{enumerate}
A cobordism $\Sigma\colon K_1 \ra K_2$ induces a filtered map
$Lee(\Sigma)\colon \lee * (K_1) \ra \lee * (K_2)$ of filtered degree
$\chi(\Sigma)$ meaning that $\im(F^j\lee * (K_1) \subset
F^{j+\chi(\Sigma)}\lee * (K_2)$. Denoting the filtration grading by
$gr$ this means that for $\alpha\in \lee * (K)$ we have
$$
gr(Lee(\Sigma)(\alpha) \geq gr(\alpha) + \chi(\Sigma)
$$
Also, Rasmussen shows:

\begin{prop}
 If $\Sigma$ is connected then $Lee (\Sigma)$ is an isomorphism. 
\end{prop}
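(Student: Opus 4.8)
The plan is to work with Lee's canonical generators. Recall from Lee's construction that for an oriented link diagram $D$ equipped with an orientation $\mathfrak{o}$ there is a distinguished cycle $\mathfrak{s}_{\mathfrak{o}}$, obtained by labelling each circle of the oriented resolution of $D$ by one of the two idempotents $e_{\pm}=\frac{1\pm x}{2}$ of the Lee algebra $\bbQ[x]/(x^{2}-1)$ according to Lee's nesting rule, and that as $\mathfrak{o}$ ranges over the orientations of the link the classes $[\mathfrak{s}_{\mathfrak{o}}]$ form a $\bbQ$-basis of $\lee{*}(D)$. For a knot $K$ there are exactly two orientations $\mathfrak{o},\bar{\mathfrak{o}}$, so $\{[\mathfrak{s}_{\mathfrak{o}}],[\mathfrak{s}_{\bar{\mathfrak{o}}}]\}$ is a basis of the two-dimensional space $\lee{0}(K)$ (concentration in degree zero being part of the theorem quoted above).

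Now orient $\Sigma$. Its orientation restricts to orientations $\mathfrak{o}_{1}$ of $K_{1}$ and $\mathfrak{o}_{2}$ of $K_{2}$; reversing the orientation of $\Sigma$ restricts to $\bar{\mathfrak{o}}_{1}$ and $\bar{\mathfrak{o}}_{2}$. The crux is the claim that
\[
Lee(\Sigma)\bigl([\mathfrak{s}_{\mathfrak{o}_{1}}]\bigr)=c_{1}\,[\mathfrak{s}_{\mathfrak{o}_{2}}],
\qquad
Lee(\Sigma)\bigl([\mathfrak{s}_{\bar{\mathfrak{o}}_{1}}]\bigr)=c_{2}\,[\mathfrak{s}_{\bar{\mathfrak{o}}_{2}}]
\]
for some nonzero scalars $c_{1},c_{2}\in\bbQ$. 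Granting this, the matrix of $Lee(\Sigma)$ with respect to the two bases above is $\operatorname{diag}(c_{1},c_{2})$, which is invertible, and the proposition follows.

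To establish the claim I would choose a movie presenting $\Sigma$ and decompose it into Reidemeister moves together with elementary cobordisms (births, saddles, deaths); since $\Sigma$ is oriented, every intermediate diagram inherits an orientation and every saddle is compatible with it. Using the idempotent description of the Lee algebra one tracks how each elementary piece acts on a labelled oriented resolution: Reidemeister moves and orientation-compatible saddles carry the canonically labelled state to the canonically labelled state of the target (up to sign, and, for a splitting, a factor of $2$, since $\Delta(e_{\pm})=\pm 2\,e_{\pm}\otimes e_{\pm}$ and $m(e_{\pm}\otimes e_{\pm})=e_{\pm}$), while a birth replaces the label on the new circle by $1=e_{+}+e_{-}$. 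A birth is thus the one step that destroys purity of the labelling, but in a connected cobordism the born circle is necessarily absorbed later by a merging saddle, and $m(e_{\sigma}\otimes e_{+})+m(e_{\sigma}\otimes e_{-})=e_{\sigma}$ recombines the sum back into a single idempotent. Iterating, $Lee(\Sigma)$ sends $[\mathfrak{s}_{\mathfrak{o}_{1}}]$ to a scalar multiple of $[\mathfrak{s}_{\mathfrak{o}_{2}}]$, and likewise for the opposite orientation. The coefficient can only vanish through a merging saddle identifying two circles carrying opposite idempotents, which is impossible because $\Sigma$ carries a global orientation, or through a closed $2$-sphere component of $\Sigma$, which evaluates to $0$ in Lee theory; and this is exactly where connectedness enters, since a connected cobordism between two knots has nonempty boundary and hence no closed components. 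Therefore $c_{1},c_{2}\neq 0$.

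The main obstacle is the bookkeeping in the last paragraph: verifying that the canonically labelled state really does survive, with a nonzero coefficient, through an arbitrary movie for $\Sigma$. The delicate point is the interplay between births — which spoil purity of the labelling — and the later merging saddles that absorb the born circles and restore it; making this precise is essentially Lee's lemma on the behaviour of the classes $\mathfrak{s}_{\mathfrak{o}}$ under cobordism, and it requires one to check carefully that along an oriented cobordism no merging saddle ever identifies two circles bearing opposite idempotents. (An alternative route, which sidesteps the canonical generators, is to arrange the movie so that $\Sigma$ becomes a composition of isotopies and handle attachments, and to observe that each handle attachment induces multiplication by $2x$ on one circle, which is invertible over $\bbQ$ because $x^{2}=1$; but this in turn needs the standard fact that a connected cobordism between knots admits such a movie.)
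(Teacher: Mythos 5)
The paper does not actually give a proof of this proposition; it simply attributes the result to Rasmussen, so the comparison must be made with Rasmussen's original argument in \emph{Khovanov homology and the slice genus}. Your main line of reasoning reproduces that argument faithfully: Lee's canonical generators $[\mathfrak{s}_{\mathfrak{o}}]$, indexed by orientations, form a basis of $\lee *(K)$; a connected cobordism between knots sets up a bijection between the two orientations of $K_1$ and those of $K_2$ by restricting the two orientations of $\Sigma$; and Rasmussen's Proposition~4.1 asserts that $Lee(\Sigma)$ carries $[\mathfrak{s}_{\mathfrak{o}_1}]$ to a nonzero multiple of $[\mathfrak{s}_{\mathfrak{o}_2}]$ whenever every component of $\Sigma$ has boundary on $K_1$. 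In the basis of canonical generators the matrix of $Lee(\Sigma)$ is therefore diagonal with nonzero entries, hence invertible. The ``delicate point'' you flag --- that births spoil the purity of the idempotent labelling and one must verify it is restored by subsequent merges, so that the total coefficient is a nonzero signed power of $2$ --- is precisely the content of that proposition of Rasmussen, and deferring to it is the right move.

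One caution on the parenthetical alternative. A connected cobordism between knots does \emph{not} in general decompose into isotopies together with tube attachments: any nontrivial concordance (for instance a slice disc with a small disc removed, giving a genus-zero connected cobordism from a slice knot to the unknot) has no tubes and yet is not an isotopy. So the ``standard fact'' invoked there is false, and the shortcut via ``multiplication by $2x$'' only computes the map induced by adding a tube to the identity cobordism. The correct elementary decomposition is into Reidemeister moves, births, saddles and deaths, and the nonvanishing of the coefficient has to be argued as in your main paragraph, exactly as Rasmussen does.
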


Using these properties one can show that Rasmussen's invariant provides an obstruction to a knot being smoothly
slice. (Recall that all manifolds and link cobordims are assumed to be smooth).

\begin{prop}\label{prop:slice}
  If $K$ is a smoothly slice knot then $s(K)=0$
\end{prop}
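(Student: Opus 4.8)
The plan is to convert the hypothesis ``$K$ is slice'' into the existence of a connected cobordism of Euler characteristic zero between the unknot and $K$, and then to feed this cobordism into the two properties of the induced maps on Lee theory recorded above: that a connected cobordism induces an isomorphism, and that a cobordism $\Sigma$ induces a map of filtered degree $\chi(\Sigma)$.

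First I would unpack ``slice''. Writing $U$ for the unknot, if $K$ bounds a smoothly embedded disk $D\subset B^4$, choose an interior point $p\in D$ and delete a small open ball around $p$ chosen so that it meets $D$ in a flat sub-disk. What remains of $B^4$ is diffeomorphic to $S^3\times I$, the deleted meridian circle in the small boundary sphere is an unknot, and what remains of $D$ is a connected annulus $\Sigma$ with $\chi(\Sigma)=\chi(D)-1=0$. Thus $\Sigma$ is a connected cobordism $\Sigma\colon U\ra K$ with $\chi(\Sigma)=0$, and turning it upside down gives an equally good connected cobordism $\bar\Sigma\colon K\ra U$ with $\chi(\bar\Sigma)=0$.

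Next I would run the filtration estimate in both directions. Since $\Sigma$ is connected, $Lee(\Sigma)\colon\lee 0(U)\ra\lee 0(K)$ is an isomorphism (both groups being $\bbQ^2$, concentrated in homological degree $0$), and having filtered degree $\chi(\Sigma)=0$ it satisfies $gr(Lee(\Sigma)(\alpha))\geq gr(\alpha)$ for all $\alpha$. Take $\alpha\in\lee 0(U)$ of top filtration grading; since $s(U)=0$ this grading is $s(U)+1=1$. Then $Lee(\Sigma)(\alpha)$ is a nonzero element of $\lee 0(K)$ with $gr\geq 1$. On the other hand, the two surviving generators of $\lee 0(K)$ lie in filtration gradings $s(K)\pm1$, and a short look at the induced filtration shows every nonzero element of $\lee 0(K)$ has grading at most $s(K)+1$; hence $s(K)+1\geq 1$, i.e. $s(K)\geq 0$. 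Applying the same reasoning to $\bar\Sigma\colon K\ra U$ with the top-filtration generator of $\lee 0(K)$, which sits in grading $s(K)+1$, and using that the top grading of a nonzero element of $\lee 0(U)$ is $s(U)+1=1$, gives $s(K)+1\leq 1$, i.e. $s(K)\leq 0$. Therefore $s(K)=0$. (Alternatively, the mirror $K^!$ of a slice knot is again slice, so $s(K^!)\geq 0$ by the first half of the argument, and $s(K^!)=-s(K)$ gives $s(K)\leq 0$ directly.)

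The genuinely substantive ingredients are the two propositions already proved above; what remains requires no hard new idea, so the only thing to be careful about is the bookkeeping. The main point to get right is the slice-to-cobordism dictionary (that the deleted meridian is really an unknot and that $\chi$ drops by exactly one), and then keeping track of the two inequalities so that they pinch $s(K)$ to $0$ rather than leaving a merely one-sided bound; using both $\Sigma$ and $\bar\Sigma$ (or, equivalently, $K$ and its mirror) is essential for this.
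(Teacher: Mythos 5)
Your proof is correct and follows essentially the same route as the paper: puncture the slice disc to get a connected cobordism of Euler characteristic zero, then combine the facts that connected cobordisms induce isomorphisms on Lee theory and that the induced map has filtered degree $\chi(\Sigma)=0$ to pinch $s(K)$ between $0$ and $0$. The only (cosmetic) difference is that you obtain the second inequality by running the reversed cobordism, whereas the paper applies the same one-sided argument to the mirror $K^!$ and invokes $s(K^!)=-s(K)$ --- an alternative you also note.
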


\begin{proof}
  Let $\Sigma$ be a slice disc with another small disc removed. This
  can be viewed as (connected) link cobordism $\Sigma\colon K \ra U$
  (the unknot). Since the Euler characteristic of $\Sigma$ is zero,
  this cobordism induces a filtered isomorphism of filtered degree
  zero 
$$Lee(\Sigma)\colon \lee 0 (K) \ra \lee 0 (U).$$
 Thus for any
  $\alpha\in \lee 0 (K)$ we have 
$$gr(Lee(\Sigma)(\alpha)) \geq
  gr(\alpha).$$
Now $\lee 0 (U)$ has two generators in filtration
  degrees $\pm 1$ and  $Lee(\Sigma)$ is an isomorphism, from which we have
  $-1 \leq gr(Lee(\Sigma)(\alpha))\leq 1$ giving
$$gr(\alpha)\leq gr(Lee(\Sigma)(\alpha))\leq 1.$$
 Now $s(K)$ is equal to $gr(\alpha)-1$ for {\em some} $\alpha$ so
  $s(K)\leq 0$. Finally, a similar argument applies to $K^!$ giving
  $s(K^!)\leq 0 $ and so $s(K)= -s(K^!)\geq 0$.
\end{proof}

\begin{rema}
This proof uses the fact that Lee theory is a {\em functor}.  
\end{rema}

In fact more is true and $s$ gives a lower bound for the slice genus.

\begin{theo}
 Rasmussen's invariant is a concordance invariant and for a knot $K$ 
 $$|s(K)| \leq 2 g_s(K),
$$
where $g_s(K)$ denotes the smooth slice genus of $K$.
\end{theo}

\begin{exer}
Prove this theorem by modifying the proof of Proposition
\ref{prop:slice} above. 
\end{exer}

\begin{rema}
  By studying the $s$-invariant for positive knots and using the
  theorem above, Rasmussen gives a simple
  proof of the Milnor conjecture (Rasmussen, \newciteold{0402131}):
  the slice genus of the torus knot $T(p,q)$ is
  $\frac{1}{2}(p-1)(q-1)$. 
\end{rema}

\begin{rema}
  Gompf gives a way of constructing non-standard smooth structures on
  $\bbR^4$ from the data of a topologically slice but not smoothly
  slice knot. By work of Freedman if the Alexander polynomial
  $\Delta_K$ is 1 then $K$ is topologically slice. Thus a non-standard
  smooth structure on $\bbR^4$ can be inferred from a knot $K$
  satisfying $\Delta_K=1$ and $s(K) \neq 0$. Examples of such knots are
  readily found, for example, the pretzel knot $P(-3,5,7)$.
\end{rema}

\begin{rema}
 There is a similar invariant to Rasmussen's, called the $\tau$-invariant, coming from
Heegaard-Floer knot homology. While in many cases $2\tau=s$  in
general this is not the case (Hedden and Ording,
\newciteold{0512348}). There is even an example of a topologically
slice knot for which $s\neq 2\tau$ (Livingston, \newciteold{0602631}).
\end{rema}

\begin{rema}
  For a short time it looked like Rasmussen's invariant might help
  to find a counter-example to the smooth 4-dimensional Poincar\'e
  conjecture (Freedman, Gompf, Morrison and Walker, \newcite{0906.5177}),
  but this hope was short lived and the potential counter-examples are
  all standard spheres (Akbulut, \newcite{0907.0136}).  In fact
 Rasmussen's invariant can be related to a
  similar invariant from instanton homology which leads to the
  conclusion that Rasmussen's invariant will never detect
  counter-examples  to the 4-dimensional Poincar\'e conjecture  of this nature
 (Kronheimer and Mrowka, \newcite{1110.1297}).
\end{rema}

By replacing Khovanov homology by $sl(N)$-homology and allowing
Gornik's theory $G_N^*$  to play the role of Lee theory, there is once again a
spectral sequence starting with the former and converging to the
latter. Since Gorniks theory has dimension $N$ concentrated in (homological) degree
0 one can get a Rasmussen-like invariant (Wu, \newciteold{0612406};
Lobb, \newcite{0702393}; Lobb, \newcite{1012.2802}). 

\begin{theo}
  (1) Let $K$ be a knot. There exists and integer $s_N(K)$ such that
$$
\sum q^j G_N^{0,j}(K) = q^{s_N(K)}\frac{q^N - q^{-N}}{q-q^{-1}}
$$
where the second grading on Gornik theory is the filtration grading.\\
(2) This provides a lower bound for the slice genus:
$$|s_N(K)| \leq 2(n-1)g_s(K).$$
\end{theo}

\begin{rema}
 It is interesting to ask if these invariants are related or not for
 various $N$.  Lewark conjectures that the invariants
 $\{s_N(K)\}_{N\geq 2}$ are linearly independent with evidence from
 the result that  $s_2(K)$ is not a linear combination of
 $\{s_N(K)\}_{N\geq 3}$ and a similar statement for $s_3(K)$ (Lewark, \newcite{1310.3100}). 
\end{rema}

Another way of obtaining a
Rasmussen-type invariant is by using the spectral sequence to
Bar-Natan theory. This gives invariants $s_R^{BN}(K)$ for a variety
of rings $R$. It was thought (incorrectly) that over $\bbZ$, $\bbQ$
and finite fields that these invariants always coincide with
Rasmussen's original invariant (Mackaay, Turner and Vaz, \newciteold{0509692}) but
this is wrong and Cotton Seed has done some calculations which show
that the knot $K=K14n19265$ has  $s(K)\neq s^{BN}_{\bbF_2}(K)$. These
invariants have been further refined (Lipshitz and Sarkar,
\newcite{1206.3532}) and a discussion of the $K14n19265$ example can be found
there.

\begin{rema}
  For links (rather than knots) there is also a way to obtain a Rasmussen-type invariant (Beliakova and
  Wehrli, \newciteold{0510382}). 
\end{rema}

\subsection{Unknot detection}
Khovanov homology is a nice functorial invariant which is known not to
be {\em complete} and it is not hard to find distinct knots with the
same Khovanov homology. However, the weaker question of whether or not
Khovanov homology detects the unknot remained open until recently. 

There are a
number of partial results applying somewhat the same approach: make something
else out of the knot and use a spectral sequence to Heegaard-Floer
homology to make a conclusion about the minium size of the $E_2$-page 
(Hedden and Watson, \newcite{0805.4423}; Hedden, \newcite{0805.4418}; Grigsby and
Wehrli, \newcite{0807.1432}). 
 %(there are some subtleties dues to the existence of $L$-space integer
 %homology spheres). 
For example, the following is a result of Hedden and Watson:
\begin{theo}
 The dimension of the reduced Khovanov homology of the (2,1)-cable of a knot $K$ is exactly 1 if and only if $K$ is the unknot.
\end{theo}

It is now known that Khovanov homology itself detects the unknot
(Kronheimer and Mrowka, \newcite{1005.4346}).

\begin{theo}
  Khovanov homology detects the unknot.
\end{theo}

This result also uses a spectral sequence but this time using another
theory defined using instantons (Kronheimer and Mrowka,
\newcite{0806.1053}). This is a deep result requiring mastery of huge
amount of low dimensional topology.

\subsection{Other applications}

After the slice genus one of the first places Khovanov homology found application was to bounding the Thurston-Bennequin number  (Shumakovitch, \newciteold{0411643}; Plamenevskaya, \newciteold{0412184}; Ng, \newciteold{0508649}).  Let $L$ be a link and let $\overline{tb}(L)$ be the maximum Thurston-Bennequin number over all Legendrian representatives of $L$. The result of Ng is:
\begin{theo} There is a bound for maximum Thurston-Bennequin number give by
$$
 \overline{tb}(L) \leq \text{min} \{ k \mid \oplus_{j-i=k} \kh i j L \neq 0\} 
$$
and this bound is sharp for alternating links. 
\end{theo}

The reduced Khovanov homology of an alternating knot lies exclusively on the line $j-2i = \text{signature(K)}$ and as a general rule the Khovanov homology of a link clusters around this line. The  {\em homological   width} of a link  the width of the diagonal band in which the non-trivial Khovanov homology lies: if
$$
w_{max}=\max\{j-2i\mid \kh i j L \neq 0\} \;\;\;\;\; w_{mim}=\min\{j-2i\mid \kh i j L \neq 0\} 
$$ 
then $\text{width}(L) = w_{max}-w_{min} + 1$. Alternating links, for example,  have width 1. Since width involves both gradings available to Khovanov homology it is revealing something new not available to, say, the Jones polynomial. It can be used to provide certain obstructions to Dehn fillings (Watson, \newcite{0807.1341}; Watson, \newcite{{1010.3051}}).

In a somewhat different direction, there is also (vector space-valued) invariant of tangles using an natural inverse system of Khovanov homology groups which can be applied  to strongly invertible knots to show that  a strongly invertible knot is the trivial knot if and only if the invariant is trivial (Watson, \newcite{1311.1085}).

It is possible to use a variant of Khovanov homology to distinguish
between braids and other tangles (Grigsby and Ni, \newcite{1305.2183}).

\section{Geometrical interpretations and related theories}
What is Khovanov homology {\em really}? What geometrical features of knots and links does it measure? Is there an intrinsic definition starting with an actual knot in $S^3$ rather than a diagrammatic representation of it? The Jones polynomial has a good ``physical'' interpretation - how about Khovanov homology? Many of the ingredients of Khovanov homology are familiar to other areas of mathematics - what bridges can be built? In this brief final section we gather together a few places where these questions have been addressed.

\subsection{Symplectic  geometry}
There is a way to define a (singly) graded vector space invariant of
links by using Lagrangian Floer cohomology. This approach has the very
nice property that it starts with
an actual link rather than a diagram. It is conjectured to be
isomorphic to Khovanov homology after collapsing the grading of the later.
\begin{itemize}
\item {\em A link invariant from the symplectic geometry of nilpotent slices} (Seidel and Smith, \newciteold{0405089})
\end{itemize}
This approach has been generalised to Khovanov-Rozansky $sl(N)$-homologies:
\begin{itemize}
\item {\em Link homology theories from symplectic geometry} (Manolescu, \newciteold{0601629})
\end{itemize}

\subsection{Knot groups and representation varieties}
For low crossing number
examples the (singly graded) Khovanov homology of a link is isomorphic
to a graded group constructed from the cohomology of the space of
$\mathit{SU}(2)$ representations of the fundamental group of the link
complement. These latter spaces can be understood in terms of
intersections of Lagrangian submanifolds of a certain symplectic manifold.

\begin{itemize}
\item {\em Symplectic topology of SU(2)-representation varieties and link homology, I: symplectic braid action and the first Chern class} (Jacobsson and Rubinsztein, \newcite{0806.2902})
\end{itemize}

\subsection{Instanton knot homology}
The observation connecting Khovanov homology to $\mathit{SU}(2)$-representation
varieties is also the starting point for the construction of a
functorial link homology theory defined as an instanton Floer homology
theory. This is the theory used to show that Khovanov homology detects
the unknot. 

\begin{itemize}
\item {\em Knot homology groups from instantons} (Kronheimer and Mrowka,\newcite{0806.1053})
\end{itemize}
\begin{itemize}
\item {\em Filtrations on instanton homology} (Kronheimer and Mrowka, \newcite{1110.1290})
\end{itemize}
\begin{itemize}
\item {\em Gauge theory and Rasmussen's invariant} (Kronheimer and Mrowka, \newcite{1110.1297})
\end{itemize}

\subsection{Derived categories of coherent sheaves}
There is an algebro-geometric construction of a link homology theory
isomorphic to  Khovanov homology.
\begin{itemize}
\item {\em Knot homology via derived categories of coherent sheaves I, sl(2) case} (Cautis and Kamnitzer, \newciteold{0701194})
\end{itemize}

\subsection{Physics}
The Jones polynomial (famously) has a description as a
path integral of a 3-dimensional gauge theory using the Chern-Simons
action.  Mathematical physics and string theory also have things to
say about Khovanov homology.

\begin{itemize}
\item {\em Khovanov-Rozansky homology and topological strings } (Gukov, Schwarz and Vafa, \cite{0412243}) {\mbox{}\marginpar{\hspace{0pt}\small{{ \href{http://arxiv.org/abs/hep-th/0412243}{0412243}}}}}
\end{itemize}

\begin{itemize}
\item {\em Link homologies and the refined topological vertex} (Gukov, Iqbal, Kozcaz and Vafa, \newcite{0705.1368})
\end{itemize}

\begin{itemize}
\item {\em Fivebranes and knots} (Witten, \newcite{1101.3216})
\end{itemize}

\begin{itemize}
\item {\em Khovanov homology and gauge theory}  (Witten, \newcite{1108.3103} )
\end{itemize}

\begin{itemize}
\item {\em Two lectures on the Jones polynomial and Khovanov homology}  (Witten, \newcite{1401.6996} )
\end{itemize}

\subsection{Homotopy theory}
Homotopy theory is a subject rich in computational and theoretical tools and it would be nice to have a way of applying these to Khovanov homology. One basic question to ask is if the Khovanov homology of a link can be obtained from the application of some classical invariant from algebraic topology (cohomology for example) to a space defined from the link. This is the notion of Khovanov homotopy type.
\begin{itemize}
\item {\em A Khovanov homotopy type} (Lipshitz and Sarkar, \newcite{1112.3932})
\end{itemize}

One can also carry out the entire construction of Khovanov homology in a homotopy theoretic setting using Eilenberg-Mac Lane spaces, homotopy limits and the description of Khovanov homology in terms of right derived functors of the inverse limit. 
\begin{itemize}
\item {\em The homotopy theory of  Khovanov homology} (Everitt and Turner, \newcite{1112.3460})
\end{itemize}

\subsection{Factorization homology}
A vast machine from algebraic topology (factorization homology of
singular manifolds) may -  on specialising - provide new link homology
theories related to Khovanov homology. 

\begin{itemize}
\item {\em Structured singular manifolds and factorization homology} (Ayala, Francis and Tanaka, \newcite{1206.5164})
\end{itemize}

\bibliographystyle{halpha.bst}
\bibliography{linkhomology}

\end{document}